\patchcmd{\@settitle}{\uppercasenonmath\@title}{}{}{}
\patchcmd{\@setauthors}{\MakeUppercase}{}{}{}
\patchcmd{\section}{\scshape}{}{}{}
\numberwithin{equation}{section}
\newcommand{\ri}{\rightarrow}
\newcommand{\R}{\mathbb{R}}
\newcommand{\J}{\mathcal{J}}
\newcommand{\gr}{\nabla}
\newcommand{\wri}{\rightharpoonup}
\newcommand{\Lp}{L^{p}(\R^{N})}
\newcommand{\Lam}{-\Delta_{p}}
\newcommand{\Y}{W^{1,p}(\R^{N})}
\newcommand{\X}{W_{0}^{1,p}(\Omega)}
\newcommand{\Jl}{\mathcal{J}_{\lambda}}
\newcommand{\Il}{\mathcal{I}_{\lambda}}
\newcommand{\la}{\lambda}
\newcommand{\I}{\mathcal{I}}
\newcommand{\Yr}{W^{1,p}_{\text{rad}}(\R^{N})}
\newcommand{\Xr}{W_{0,\text{rad}}^{1,p}(\Omega)}
\newcommand{\rad}{\text{rad}}
\DeclareMathOperator{\supp}{supp}
\newtheorem{prop}{Proposition}[section]
\newtheorem{lem}{Lemma}[section]
\newtheorem{thm}{Theorem}[section]
\newcommand{\leqnomode}{\tagsleft@true}
\newcommand{\reqnomode}{\tagsleft@false}
\date{}
\begin{document}

\title[]{\Large Normalized solution for $p$-Laplacian equation in exterior domain}
\author{{Weiqiang Zhang}$^{a,b}$\quad{Yanyun Wen}$^{c*}$
 \bigskip \\
$^{a}$School of Mathematics and Statistics, Ningxia University, Yinchuan 750021, China\\
$^{b}$Ningxia Basic Science Research Center of Mathematics, Ningxia University, Yinchuan 750021, China\\
$^{c}$School of Department of Mathematics,
Gansu Normal College For Nationalities, Hezuo 747000, China\\
}

\thanks{
E-mail: zhangwq@nxu.edu.cn (W. Zhang);  wenyy19@lzu.edu.cn (Y. Wen).
\newline \indent *Corresponding author.
}
\maketitle
\begin{abstract}
We are devoted to the study of the following nonlinear $p$-Laplacian Schr\"odinger equation with $L^{p}$-norm constraint 
\begin{align*}
\begin{cases}
&-\Delta_{p} u=\lambda |u|^{p-2}u +|u|^{r-2}u\quad\mbox{in}\quad\Omega,\\
&u=0\quad\mbox{on}\quad \partial\Omega,\\
&\int_{\Omega}|u|^{p}dx=a,
\end{cases}
\end{align*}
where $\Delta_{p}u=\text{div} (|\nabla u|^{p-2}\nabla u)$, $\Omega\subset\mathbb{R}^{N}$ is an exterior domain with smooth boundary $\partial\Omega\neq\emptyset$ satisfying that $\R^{N}\setminus\Omega$ is bounded, $N\geq3$, $2\leq p<N$, $p<r<p+\frac{p^{2}}{N}$, $a>0$ and $\lambda\in\R$ is an unknown Lagrange multiplier. First, by using the splitting techniques and the Gagliardo-Nirenberg inequality, the compactness of Palais-Smale sequence of the above problem at higher energy level is established. Then, exploiting barycentric function methods, Brouwer degree and minimax principle, we obtain a solution $(u,\la)$ with $u>0$ in $\R^{N}$ and $\la<0$ when $\R^{N}\setminus\Omega$ is contained in a small ball. Moreover, we give a similar result if we remove the restriction on $\Omega$ and     
assume $a>0$ small enough. Last, with the symmetric assumption on $\Omega$, we use genus theory to consider infinite many solutions.

\noindent
\textbf{Key Words}: $p$-Laplacian, normalized solution, exterior domain.

\noindent
\textbf{Mathematics Subject Classification}: 35B09, 35J62, 35J92.
\end{abstract}

\section{Introduction}
In this paper we investigate the following nonlinear $p$-Laplacian Schr\"odinger equation with $L^{p}$-norm constraint 
\begin{align}\label{p}
\begin{cases}
&-\Delta_{p} u=\lambda |u|^{p-2}u +|u|^{r-2}u\quad\mbox{in}\quad\Omega,\\
&u=0\quad\mbox{on}\quad \partial\Omega,\\
&\int_{\Omega}|u|^{p}dx=a,
\end{cases}
\end{align}
where $\Delta_{p}u=\text{div} (|\nabla u|^{p-2}\nabla u)$, $\Omega\subset\mathbb{R}^{N}$ is an exterior domain with smooth boundary $\partial\Omega\neq\emptyset$ satisfying that $\R^{N}\setminus\Omega$ is bounded, $N\geq3$, $2\leq p<N$, $p<r<p_{c}$ with $p_{c}=p+\frac{p^{2}}{N}$, $a>0$ and $\lambda\in\R$ is an unknown Lagrange multiplier. 

The research motivation of \eqref{p} arises from seeking the stationary solutions for reaction diffusion problems which appears in biophysics, plasma physics, and chemical reaction design. The prototype equation for these
models can be written in the form
$$
u_{t}=\Lam u+\la|u|^{p-2}u+|u|^{r-2}u,
$$
where $u$ generally stands for a concentration, the term $\Lam u$ corresponds to the diffusion with coefficient and $\la|u|^{p-2}u+|u|^{r-2}u$ represents the reaction term related to source and loss processes, see \cite{PRS2023}.

When $\Omega=\R^{N}$, we can rewrite equation \eqref{p} as follows 
\begin{align}\label{pR}
\begin{cases}
&-\Delta_{p} u=\lambda |u|^{p-2}u +|u|^{r-2}u\quad\mbox{in}\quad\R^{N},\\
&\int_{\R^{N}}|u|^{p}dx=a.
\end{cases}
\end{align}
There is a amount of research of solutions for this problem. It is well know that under the $L^{p}$-norm constraint, by the Gagliardo-Nirenberg inequality, there appears a $L^{p}$-critical exponent $p_{c}$. For  $L^{p}$-subcritical growth nonlinearity, the problem of seeking solution of problem \eqref{pR} can be reformulated as finding the global minimum of a variational functional on a constraint. However, when
the nonlinearity is $L^{p}$-supcritical growth, the variational functional is unbounded below  under the constraint. Thus one can not obtain a solution by the methods of finding a global minimum and the method one usually uses is to establish the mountain pass geometry under the restriction.
Now, we briefly present the results of study for equation \eqref{pR}.  By Schwarz rearrangement and
Ekeland variational principle,  Zhang-Zhang \cite{ZZ2022Non} obtained the existence of positive radial ground states solution with $L^{p}$-subcritical growth and $L^{p}$-supcritical growth terms, they also dealt with  the case where $L^{p}$-subcritical growth, $L^{p}$-critical growth and $L^{p}$-supcritical growth terms co-existed,
and they used the fountain theorem to obtain the existence of infinitely many nonradial sign-changing solutions. Inspired by Bartsch-Molle-Rizzi \cite{BMR2021}, in which the authors studied  the normalized solution for  Schr\"odinger equation with the affect of potential,
Deng and Wu \cite{DW2023p}  investigated the existence of normalized solution in the presence of potentials
and pointed out that the solution they obtained is  mountain pass type. Moreover, they in \cite{DW2023} also 
considered the existence of solution and infinitely many solutions for \eqref{pR} with Sobolev exponent and $L^{p}$-subcritical exponent. For more researches on \eqref{pR}, we refer the readers to \cite{CR2024,LZ2024,SW2024}.

When $p=2$, $\la<0$ is a fixed constant and without the $L^{p}$-norm constraint to \eqref{p}, this equation becomes the Schr\"odinger equation
\begin{align}\label{sch}
-\Delta u=\la u+|u|^{r-2}u\quad\mbox{in}\quad\Omega.
\end{align}
The study of  existence of nontrivial solutions for \eqref{sch} has a number of literature in the exterior domain $\Omega$, we cite to \cite{ABT2020,Ad2017,AAT2021}. As we know, there few work to \eqref{sch} under the  constant 
\begin{align}\label{cos}
\int_{\Omega}|u|^{2}dx=a.
\end{align}
Precisely, Zhang-Zhang \cite{ZZ2022} investigated equation \eqref{sch}-\eqref{cos} if the nonlinearity is $L^{2}$-subcritical growth. They provided a method to show the compactness of Palas-Smale sequence for the corresponding variational functional. 
Later, Yu-Tang-Zhang \cite{YTZ2023,YTZ2023Z} extended and improved the work of Zhang-Zhang \cite{ZZ2022} to fractional Laplacian equation and Choquard equation.  We note that Song-Hajaiej \cite{SH2024} considered \eqref{sch}-\eqref{cos} in the exterior of a ball
with $L^{2}$-supercritical nonlinearity, which is a first literature to deal with this problem under $L^{2}$-supercritical case.
In \cite{LM2023}, Lancelotti-Molle derived the existence of normalized solution with potential in $L^{2}$-supercritical growth.

In this paper, we shall use the variational approaches to study the existence of solutions for \eqref{p}. The variational functional of \eqref{p} is defined by 
$$
\J(u)=\frac{1}{p}\int_{\Omega}|\gr u|^{p}dx-\frac{1}{r}\int_{\Omega}|\gr u|^{r}dx.
$$
We consider this problem on the manifold
$$
S(a)=\{u\in W^{1,p}(\Omega):\int_{\Omega}|u|^{p}dx=a\}.
$$
In order to study the solution of \eqref{p}, a natural idea is to look at the existence of global minimum  for the following constraint problem 
\begin{align}\label{mi}
m(a)=\inf\{\J(u): u\in S(a)\}.
\end{align}
The following result shows that this global minimum does not exist.

\begin{thm}\label{thm1}
The minimization problem \eqref{mi} is not attained.
\end{thm}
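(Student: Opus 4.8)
The plan is to compare \eqref{p} with its whole-space counterpart \eqref{pR} and then to exclude attainment by means of a strong maximum principle. First I would record that $\J$ is coercive and bounded below on $S(a)$. Writing $\gamma=\frac{N(r-p)}{pr}$, the Gagliardo--Nirenberg inequality gives $\int_{\Omega}|u|^{r}\,dx\le C\|\gr u\|_{p}^{r\gamma}\|u\|_{p}^{r(1-\gamma)}$, and the assumption $r<p_{c}$ is exactly the statement $r\gamma<p$. Hence for $u\in S(a)$ one has $\J(u)\ge \frac1p\|\gr u\|_{p}^{p}-Ca^{r(1-\gamma)/p}\|\gr u\|_{p}^{r\gamma}$ with $r\gamma<p$, so $m(a)>-\infty$; moreover $m(a)<0$, since testing with $u_{t}(x)=t^{N/p}u(tx)$ (which preserves $\|u\|_{p}$) yields $\J(u_{t})=\frac{t^{p}}{p}\|\gr u\|_{p}^{p}-\frac{t^{N(r-p)/p}}{r}\|u\|_{r}^{r}$, whose negative term dominates as $t\to0^{+}$ because $N(r-p)/p<p$.

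The key step is to prove $m(a)=m_{\infty}(a)$, where $m_{\infty}(a)$ is the infimum of $\J_{\infty}(u):=\frac1p\int_{\R^{N}}|\gr u|^{p}\,dx-\frac1r\int_{\R^{N}}|u|^{r}\,dx$ over $\{u\in\Y:\int_{\R^{N}}|u|^{p}\,dx=a\}$. The inequality $m(a)\ge m_{\infty}(a)$ is immediate by extending any admissible $u\in\X$ by zero. For the reverse inequality I would slide a minimizer off to infinity: let $w\ge0$ attain $m_{\infty}(a)$ (its existence in the $L^{p}$-subcritical regime being known, cf.\ \cite{ZZ2022Non}), fix a cutoff $\eta$ vanishing on a ball containing $\R^{N}\setminus\Omega$ and equal to $1$ outside a larger ball, and set $v_{R}(x)=\eta(x)\,w(x-y_{R})$ with $|y_{R}|\to\infty$. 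Because $w\in\Y$, the quantity $\int_{B-y_{R}}(|w|^{p}+|\gr w|^{p})$ over any fixed ball $B$ tends to $0$, so $\|v_{R}-w(\cdot-y_{R})\|_{\Y}\to0$; hence $v_{R}\in\X$, $\|v_{R}\|_{p}^{p}\to a$ and $\J(v_{R})\to m_{\infty}(a)$. Normalizing $v_{R}$ onto $S(a)$ gives $m(a)\le m_{\infty}(a)$, whence equality.

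Finally, suppose for contradiction that $m(a)$ is attained at some $u_{0}\in S(a)$; replacing $u_{0}$ by $|u_{0}|$ we may assume $u_{0}\ge0$. Its zero-extension $\tilde u_{0}$ lies in $\{u\in\Y:\int_{\R^{N}}|u|^{p}=a\}$ with $\J_{\infty}(\tilde u_{0})=\J(u_{0})=m(a)=m_{\infty}(a)$, so $\tilde u_{0}\ge0$ is a global minimizer on $\R^{N}$ and therefore solves, for some $\lambda\in\R$, the equation $-\Delta_{p}\tilde u_{0}=\lambda\tilde u_{0}^{p-1}+\tilde u_{0}^{r-1}$ in $\R^{N}$. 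By $C^{1,\alpha}$-regularity for the $p$-Laplacian and Vázquez's strong maximum principle --- applicable since $\Delta_{p}\tilde u_{0}\le|\lambda|\tilde u_{0}^{p-1}=:\beta(\tilde u_{0})$ with $\beta$ nondecreasing, $\beta(0)=0$ and $\int_{0^{+}}(s\beta(s))^{-1/p}\,ds=+\infty$ (and the case $\lambda=0$ reducing to the superharmonic version) --- a nonnegative nontrivial solution must be strictly positive throughout $\R^{N}$. This contradicts $\tilde u_{0}\equiv0$ on the nonempty open set $\R^{N}\setminus\overline{\Omega}$, so $m(a)$ cannot be attained.

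The main obstacle is the identity $m(a)=m_{\infty}(a)$ combined with the correct deployment of the strong maximum principle: one must verify that the translation/cut-off errors genuinely vanish in $\Y$ (so that the sliding construction is admissible in $\X$ with the right limiting energy), and that the sign of the multiplier $\lambda$ does not obstruct Vázquez's theorem, both signs being admissible as indicated above.
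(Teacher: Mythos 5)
Your proof is correct and follows essentially the same route as the paper: first the identity $m(a)=m_{*}(a)$ (your $m_{\infty}(a)$), obtained by sliding a cut-off translate of the whole-space minimizer off to infinity (this is exactly the paper's map $\Psi_{a,R}$ and Lemmas \ref{be}--\ref{min}), and then the observation that a nonnegative constrained minimizer on $\Omega$ would, after zero extension, be a global minimizer on $\R^{N}$, hence solve the Euler--Lagrange equation there and be strictly positive by the strong maximum principle, contradicting its vanishing on $\R^{N}\setminus\Omega$. Your explicit verification of V\'azquez's condition (covering both signs of $\lambda$) is a detail the paper leaves implicit; the only blemish is the peripheral claim that $m(a)<0$ via the scaling $u_{t}(x)=t^{N/p}u(tx)$ applied directly inside $S(a)$ --- this scaling need not preserve the support constraint since $t^{-1}\Omega\not\subset\Omega$ in general --- but that fact is never used in the non-attainment argument, and the same scaling is legitimate for $m_{\infty}(a)$.
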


When $\Omega$ is replaced by $\R^{N}$, we know that $m(a)$ is attained, see \cite{CJS2010,G2018}. The fact that \eqref{mi} is not attained is a crucial feature of elliptic 
equation on exterior domain, which implies that the ground state of \eqref{p} does not exist and we can not use the strategy in  
\cite{CJS2010,G2018} to investigated the existence of nontrivial solution.

To explore the existence of solution of \eqref{p}, we borrow the ideals in \cite{ZZ2022,YTZ2023,YTZ2023Z}. 
First, we establish the compactness of Palas-Smale sequence. The classical Lions concentration compactness \cite{L1984} no longer applies since $\Omega$ is a domain. Besides, the unboundedness of the domain $\Omega$ implies that the working space $\X$ is embedded in $L^{s}(\R^{N})$  for any $s\in(p,p^{*})$ non compactly. Thus we have to  overcome the difficulties posed by these two issues.
We shall use the splitting lemma to get the compactness of Palas-Smale sequence of $\J$ at the levels closed to $m(a)$. In this process,  the uniqueness of solution for the problem
$$
\Lam u=\la u+|u|^{r-2}u\quad\mbox{in}\quad\R^{N}
$$
plays an important role. Later, we exploit the barycenter function techniques to derive a Palas-Smale sequence of $\J$ at the levels closed to $m(a)$. Differently from \cite{ZZ2022,YTZ2023,YTZ2023Z}, since $p$-Laplacian is nonlinear, we need a more careful analysis 
to finish the compactness. Especially, we prove the  almost everywhere convergence for the gradients of Palas-Smale sequence.

We state the rest of results as follows.

\begin{thm}\label{thm2}
Suppose that $a>0$, $N\geq 3$, $2\geq p<N$ and $p<r<p_{c}$. If there exists $\rho=\rho(a)>0$ being a small constant depending on $a$, such that if
 $\Omega^{c}\subset B_{\rho}(0)$, then problem \eqref{p} has a positive solution $u\in S(a)$ with $\lambda<0$.
\end{thm}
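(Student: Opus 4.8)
The plan is to realise a positive solution of \eqref{p} as a constrained critical point of $\J$ on $S(a)\cap\X$ sitting at an energy level $c$ just above $m(a)$. Recall that $m(a)<0$, that it coincides with the ground state level of the problem on $\R^N$, and that on $\R^N$ it is attained by the unique positive radial ground state $w$, whereas on $\Omega$ it is not attained (Theorem \ref{thm1}); in particular $\J(u)>m(a)$ for every $u\in S(a)\cap\X$. By the splitting lemma established earlier there is a compactness window $\delta>0$ such that $\J|_{S(a)\cap\X}$ satisfies the Palais--Smale condition at every level in $(m(a),m(a)+\delta)$, and the smallness of the hole will be used precisely to drive the min-max level into this window.

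First I would build the test family. Fix a radial cut-off $\chi$ that vanishes on a neighbourhood of $\Omega^c\subset B_\rho(0)$ and equals $1$ outside $B_{2\rho}(0)$, and let $\Phi(y)\in S(a)\cap\X$ be the $L^p$-renormalisation of $\chi\,w(\cdot-y)$, $y\in\overline{B_R}$. Two facts are needed: for $|y|=R$ large the bump $w(\cdot-y)$ sits far from the hole, so $\J(\Phi(y))=m(a)+o_R(1)$; and since $\rho$ is small, excising $w$ near the small hole costs energy $\varepsilon(\rho)\to0$, so that $\max_{\overline{B_R}}\J(\Phi)\le m(a)+\varepsilon(\rho)$. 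I would then fix a continuous, localised barycenter $\beta:S(a)\cap\X\to\R^N$ (a normalised first moment of $|u|^p$), arranged so that $\beta(\Phi(y))$ is a small perturbation of $y$ for $R$ large.

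The candidate level is $c=\inf_{h\in\Gamma}\max_{y\in\overline{B_R}}\J(h(y))$ over $\Gamma=\{h\in C(\overline{B_R},S(a)\cap\X):h|_{\partial B_R}=\Phi|_{\partial B_R}\}$. Since $\Phi\in\Gamma$, $c\le m(a)+\varepsilon(\rho)$. For the lower bound I would use Brouwer degree: on $\partial B_R$ the map $\beta\circ h=\beta\circ\Phi$ is a small perturbation of the identity, so $\deg(\beta\circ h,B_R,0)=1$ and every competitor has a point $y_0$ with $\beta(h(y_0))=0$. The decisive estimate is $\underline c(\rho):=\inf\{\J(u):u\in S(a)\cap\X,\ \beta(u)=0\}>m(a)$: a configuration with vanishing barycenter is either balanced among two or more well-separated bumps, whose energy is essentially $2m(a/2)>m(a)$ by the strict subadditivity of $a\mapsto m(a)$, or it is a single bump forced to sit on the hole and hence losing a fixed amount of energy. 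Consequently $c\ge\underline c(\rho)>m(a)$. Choosing $\rho$ small so that $\varepsilon(\rho)<\min\{\delta,|m(a)|\}$, and then $R$ so large that $\sup_{\partial B_R}\J(\Phi)<\underline c(\rho)$, makes $c$ a bona fide min-max level, strictly above the boundary values and inside $(m(a),m(a)+\delta)$; the minimax principle then produces a Palais--Smale sequence for $\J|_{S(a)\cap\X}$ at level $c$.

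Finally I would close the argument. Since $c\in(m(a),m(a)+\delta)$, the splitting lemma forbids any loss of mass at infinity, so the Palais--Smale sequence converges strongly to a critical point $u\in S(a)\cap\X$ with $\J(u)=c\le m(a)+\varepsilon(\rho)<0$ and an associated Lagrange multiplier $\lambda$. Running the scheme on nonnegative functions gives $u\ge0$, and the strong maximum principle for the $p$-Laplacian upgrades this to $u>0$ in $\Omega$. The sign of $\lambda$ is then immediate: testing the equation with $u$ gives $\|\gr u\|_p^p=\lambda a+\|u\|_r^r$, while $\J(u)<0$ forces $\tfrac1p\|\gr u\|_p^p<\tfrac1r\|u\|_r^r$, and with $r>p$ these combine to $\lambda a<\tfrac{p-r}{p}\|\gr u\|_p^p<0$. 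I expect the main obstacle to be twofold. First, the calibration of the parameters so that $c$ lands in the narrow window $(m(a),m(a)+\delta)$: this is exactly where the hypothesis $\Omega^c\subset B_{\rho(a)}(0)$ with $\rho(a)$ small is indispensable, and it also requires a careful barycenter tailored to the non-compact domain. Second, because $\Lam$ is nonlinear, passing to the limit in the term $|\gr u_n|^{p-2}\gr u_n$ requires upgrading weak convergence of the Palais--Smale sequence to almost everywhere convergence of the gradients, via a Boccardo--Murat type argument.
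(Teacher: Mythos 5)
Your proposal is correct and follows essentially the same route as the paper's own proof: a compactness window just above $m(a)$ obtained from the splitting lemma (Lemma \ref{co}), a test family of $L^{p}$-renormalized cut-off translates of the ground state (the map $\Psi_{a,R}$ of Lemmas \ref{be} and \ref{up}), a truncated barycenter together with a Brouwer-degree linking argument yielding $m_{*}(a)<b(a)\leq d<\xi m_{*}(a)$ (Lemmas \ref{esb}, \ref{ta}, \ref{bu}), and finally Ghoussoub's minimax principle plus strong convergence, the maximum principle and testing with $u$ to get $u>0$ and $\lambda<0$. The only minor differences are cosmetic: the paper proves the key lower bound $b(a)>m_{*}(a)$ via the structure of minimizing sequences (Proposition \ref{comm}, Ekeland plus uniqueness of the ground state) rather than your subadditivity dichotomy, and its barycenter $\mu$ carries a bounded weight $\chi(|x|)x$, so the degree argument uses only the positivity of $\langle\mu(\Psi_{a,R}(y)),y\rangle$ on the boundary sphere rather than closeness of $\mu\circ\Psi_{a,R}$ to the identity.
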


\begin{thm}\label{thm3}
Suppose that $N\geq 3$, $2\geq p<N$, $p<r<p_{c}$ and $\Omega \subset\R^{N}$ is an exterior domain. Then there exists a constant $a_{*}>0$ such that for any $0<a\leq a^{*}$, problem \eqref{p} has a positive solution $u\in S(a)$ with $\lambda<0$.
\end{thm}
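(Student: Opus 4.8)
The plan is to obtain Theorem~\ref{thm3} from Theorem~\ref{thm2} by a scaling argument that converts the smallness of the mass $a$ into the smallness of the obstacle $\Omega^{c}$, rather than re-running the barycenter/degree machinery from scratch. The starting point is that the differential equation in \eqref{p} carries an exact dilation symmetry linking problems on $\Omega$ to problems on the scaled exterior domains $t\Omega$, $t>0$. Concretely, for parameters $c,t>0$ I would substitute $u(x)=c\,v(tx)$ and use $\Lam u(x)=c^{p-1}t^{p}(\Lam v)(tx)$; dividing the resulting identity by $c^{p-1}t^{p}$ shows that $u$ solves $\Lam u=\la|u|^{p-2}u+|u|^{r-2}u$ in $\Omega$ if and only if $v$ solves $\Lam v=\tilde\la|v|^{p-2}v+c^{r-p}t^{-p}|v|^{r-2}v$ in $t\Omega$, where $\tilde\la=\la t^{-p}$. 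Choosing $c=t^{p/(r-p)}$ normalizes the coefficient of the nonlinear term to $1$, so that $v$ satisfies \emph{the same} equation on $t\Omega$, and the Dirichlet condition transfers since $\partial(t\Omega)=t\,\partial\Omega$.

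Next I would track the constraint. From $\int_{\Omega}|u|^{p}\,dx=a$ and $u(x)=c\,v(tx)$ one gets $\int_{t\Omega}|v|^{p}\,dy=a\,c^{-p}t^{N}=a\,t^{\,N-p^{2}/(r-p)}$. Setting $\gamma:=\frac{p}{r-p}-\frac{N}{p}$, the exponent is exactly $-p\gamma$, and the $L^{p}$-subcritical hypothesis $r<p_{c}=p+\frac{p^{2}}{N}$ is precisely what makes $\gamma>0$. Hence, if I fix the target mass equal to $1$ and put $t=t(a):=a^{1/(p\gamma)}$, then $\int_{t\Omega}|v|^{p}\,dy=1$, i.e. $v\in S(1)$ for the domain $t\Omega$; crucially $t(a)\to 0^{+}$ as $a\to 0^{+}$, again because $\gamma>0$.

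The reduction is then immediate. Since $\R^{N}\setminus\Omega$ is bounded, fix $R>0$ with $\Omega^{c}\subset B_{R}(0)$, so that $(t\Omega)^{c}=t\,\Omega^{c}\subset B_{tR}(0)$. Applying Theorem~\ref{thm2} at the fixed mass $1$ furnishes a threshold $\rho(1)>0$; taking $a_{*}:=(\rho(1)/R)^{p\gamma}$ guarantees $t(a)R\le\rho(1)$ for all $0<a\le a_{*}$, which places $t\Omega$ inside the hypotheses of Theorem~\ref{thm2}. That theorem yields a positive $v\in S(1)$ solving \eqref{p} on $t\Omega$ with multiplier $\tilde\la<0$. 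Scaling back via $u(x)=c\,v(tx)$ produces a positive $u\in S(a)$ solving \eqref{p} on $\Omega$ with $\la=t^{p}\tilde\la$; since $t>0$ we get $\la<0$, and $u>0$ because $c>0$.

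The one genuinely delicate point is that the dilated problem must actually land in the hypotheses of Theorem~\ref{thm2}, and this is where the structural restriction does the work: it is the sign $\gamma>0$, equivalently the subcritical bound $r<p_{c}$, that forces $t(a)\to0$ and thereby shrinks the fixed obstacle $(t\Omega)^{c}$ into an arbitrarily small ball; had $r\ge p_{c}$ one would find $t(a)\not\to0$ and the scheme would break down. I expect the remaining verifications—that $t\Omega$ is again an exterior domain with smooth nonempty boundary and bounded complement, and the bookkeeping of exponents in the $L^{p}$ and nonlinear terms—to be entirely routine.
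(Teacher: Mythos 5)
Your proposal is correct and is essentially the paper's own proof: the authors likewise dilate the domain ($\Omega_{\theta}=\theta\Omega$ with $\theta=a^{1/(p s_{p,r})}$, where $s_{p,r}=\frac{p}{r-p}-\frac{N}{p}$ is your $\gamma$), apply Theorem \ref{thm2} at mass $1$ on the shrunk domain, and scale back via $u(x)=\theta^{p/(r-p)}u_{1}(\theta x)$, obtaining $\la=\theta^{p}\la_{1}<0$. Your bookkeeping of the threshold, $a_{*}=(\rho(1)/R)^{p\gamma}$, is in fact slightly more careful than the paper's stated $a_{*}=(R_{1}/R)^{s_{p,r}}$, whose exponent appears to be off by a factor $p$.
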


When $\Omega=\R^{N}\setminus\overline{B_{1}(0)}$, according to the symmetric minimax theorem  developed by Jeanjean-Lu \cite{JL2019},
we  can show the existence of infinitely many radial solutions with negative energy.

\begin{thm}\label{thm4}
Suppose that $a>0$, $N\geq 3$, $2\geq p<N$, $p<r<p_{c}$ and $\Omega=\R^{N}\setminus\overline{B_{1}(0)}$. Then problem \eqref{p} has a positive radial solution $u\in S(a)$ with $\lambda<0$. Moreover, there exists infinitely many solutions $\{(u_{n},\lambda_{n})\}$ such that $u_{n}$ is radial, $\lambda_{n}<0$, 
$\J(u_{n})<0$ and 
$$\J(u_{n})\ri0^{-}\quad\mbox{as}\quad n\ri\infty.
$$
\end{thm}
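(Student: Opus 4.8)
The plan is to reduce everything to the radial subspace $\Xr$, where the embedding $\Xr\hookrightarrow L^{s}(\Omega)$ is compact for every $s\in(p,p^{*})$, and to exploit that both $\J$ and the constraint $S(a)$ are invariant under the action of $O(N)$ on $\Omega=\R^{N}\setminus\overline{B_{1}(0)}$. By the principle of symmetric criticality, any critical point of $\J$ restricted to $S(a)\cap\Xr$ is a weak radial solution of \eqref{p}; moreover $\J$ is even and $S(a)\cap\Xr$ is symmetric, which opens the door to genus theory. I would obtain the single positive solution by constrained minimization and the infinitely many solutions by a symmetric minimax scheme à la Jeanjean--Lu.

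For the first assertion, set $m_{\rad}(a)=\inf\{\J(u):u\in S(a)\cap\Xr\}$. Since $r<p_{c}$, the Gagliardo--Nirenberg inequality shows $\J$ is coercive and bounded below on $S(a)\cap\Xr$. Testing with a radial bump $\phi$ (normalized so that $\|\phi\|_{p}^{p}=a$) supported in an annulus $\{1<|x|<R\}$ and using the mass-preserving rescaling $\phi_{t}(x)=t^{N/p}\phi(tx)$, which keeps $\supp\phi_{t}\subset\Omega$ for $t\le1$, yields $\J(\phi_{t})<0$ for $t$ small, hence $m_{\rad}(a)<0$. Let $u_{n}$ be a minimizing sequence; coercivity gives boundedness, so $u_{n}\wri u$ in $\Xr$ and $u_{n}\ri u$ strongly in $L^{r}$, whence by weak lower semicontinuity $\J(u)\le m_{\rad}(a)$. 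The delicate point is that $L^{p}$-convergence is not compact, so a priori $\|u\|_{p}^{p}=b\le a$ with possible loss of mass. I would rule this out as follows: $b>0$ because $m_{\rad}(a)<0$ forces $\|u\|_{r}\ne0$; and if $0<b<a$, rescaling $u$ up to mass $a$ via $v=(a/b)^{1/p}u\in S(a)\cap\Xr$ yields, because $r>p$ and $\J(u)\le m_{\rad}(a)<0$, the strict inequality $\J(v)<\J(u)\le m_{\rad}(a)$, contradicting the definition of $m_{\rad}(a)$. Hence $b=a$, the infimum is attained, and replacing $u$ by $|u|$ (which lies in $S(a)\cap\Xr$ with the same energy) together with the strong maximum principle for $\Lam$ produces a positive minimizer. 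Finally, testing the Euler--Lagrange equation with $u$ gives $\|\gr u\|_{p}^{p}=\la a+\|u\|_{r}^{r}$, while $\J(u)<0$ means $\tfrac1p\|\gr u\|_{p}^{p}<\tfrac1r\|u\|_{r}^{r}$; since $p<r$ this forces $\|\gr u\|_{p}^{p}<\|u\|_{r}^{r}$ and therefore $\la<0$.

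For the infinitely many solutions I would run a genus-based minimax on the even functional $\J$. Let $\Sigma_{k}=\{A\subset S(a)\cap\Xr:A\ \text{compact},\ A=-A,\ \gamma(A)\ge k\}$ with $\gamma$ the Krasnoselskii genus, and set $c_{k}=\inf_{A\in\Sigma_{k}}\sup_{u\in A}\J(u)$. Each $c_{k}$ is finite ($\J$ bounded below) and the sequence is nondecreasing. To see $c_{k}<0$ for every $k$, choose $k$ radial functions with pairwise disjoint supports in disjoint annuli of $\Omega$, spanning a $k$-dimensional space $V_{k}$; on the compact symmetric set $V_{k}\cap S(a)$ the quantities $\|\gr u\|_{p}^{p}$ and $\|u\|_{r}^{r}$ are bounded above and below by positive constants, so a single small dilation $t_{0}$ makes $\J<0$ uniformly on $A_{k}=\{u_{t_{0}}:u\in V_{k}\cap S(a)\}$, an odd homeomorphic image of $S^{k-1}$ with $\gamma(A_{k})=k$; thus $c_{k}\le\sup_{A_{k}}\J<0$. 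The crucial step is the Palais--Smale condition at these negative levels: for a sequence with $\J(u_{n})\ri c<0$ and constrained gradient tending to zero, the approximate multipliers $\la_{n}=a^{-1}(\|\gr u_{n}\|_{p}^{p}-\|u_{n}\|_{r}^{r})+o(1)$ are negative and bounded away from $0$ (since $c<0$ forces $\liminf\|u_{n}\|_{r}^{r}>0$), after which strong convergence follows from the $(S_{+})$ property of $\Lam$ on $\Xr$: the $L^{r}$-compactness together with $\la<0$ and $a-b\ge0$ give $\limsup\langle\Lam u_{n},u_{n}-u\rangle\le\la(a-b)\le0$, whence $\gr u_{n}\ri\gr u$ in $L^{p}$ and $\|u\|_{p}^{p}=a$. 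Standard genus deformation then makes each $c_{k}$ a critical value, producing solutions $(u_{n},\la_{n})$ with $u_{n}$ radial, $\J(u_{n})=c_{n}<0$ and $\la_{n}<0$ by the energy argument above. Since $c_{k}\nearrow c_{\infty}\le0$, if $c_{\infty}<0$ the compactness of the critical set at that level would force its genus to be infinite, a contradiction; hence $c_{k}\ri0^{-}$, giving infinitely many solutions with $\J(u_{n})\ri0^{-}$.

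The main obstacle is the compactness analysis for the quasilinear operator: because $\Lam$ is nonlinear, one cannot pass to the limit in the weak formulation from weak $L^{p'}$-convergence of $|\gr u_{n}|^{p-2}\gr u_{n}$ alone, and the $(S_{+})$ property must be justified through the almost everywhere convergence of the gradients $\gr u_{n}\ri\gr u$ (as flagged in the introduction), typically via the monotonicity and Simon inequalities for $\Lam$. Controlling the possible $L^{p}$-mass loss---harmless for the minimization through the sub-homogeneity rescaling, but requiring the sign $\la<0$ in the minimax setting---and verifying $c_{k}\ri0^{-}$ are the two places where the argument must be carried out with care.
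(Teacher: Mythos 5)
Your proposal is correct and follows essentially the same route as the paper's Section~5: reduction to the radial space $\Xr$ with its compact embedding into $L^{s}(\Omega)$ for $s\in(p,p^{*})$, a Krasnoselskii-genus minimax on $S(a)\cap\Xr$ whose negative levels are produced by dilated, disjointly supported radial bumps (the paper's Lemma~\ref{eq}), and a Palais--Smale analysis at negative levels that rests on the Lagrange multipliers being negative and bounded away from zero together with the Simon/Damascelli monotonicity inequality for $p\geq 2$ (the paper's Lemma~\ref{cor}). The differences are purely executional and all sound: you obtain the first positive solution by direct minimization, excluding mass loss via the rescaling $v=(a/b)^{1/p}u$, where the paper reads it off as the first minimax level; you prove gradient compactness by a global $(S_{+})$ argument, $\limsup_{n}\langle \Lam u_{n},u_{n}-u\rangle\leq\la(a-b)\leq 0$, which by monotonicity also forces $b=a$, where the paper instead uses cutoff test functions $\varphi_{R}(u_{n}-u)$ plus the Br\'ezis--Lieb lemma; and you sketch the deformation argument yielding $c_{k}\ri 0^{-}$, which the paper imports wholesale from \cite[Theorem 2.1]{JL2019} (your restriction of the $(PS)_{c}$ condition to $c<0$ is, incidentally, exactly what that theorem requires and what the multiplier-sign argument actually delivers).
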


The organization of this paper is as follows. In section \ref{sec2}, we give some preliminary results and the non-existence of ground state. In section \ref{sec3}, we present the compactness of Palas-Smale sequence. In section \ref{sec4}, we complete the proof of Theorem \ref{thm2} and Theorem \ref{thm3}. In section \ref{sec5}, we prove Theorem \ref{thm4}.

Thought out this paper, let $o_{n}(1)$  represent $o_{n}(1)\ri 0$ as $n\ri\infty$, and $\rightharpoonup$ and $\ri$ denote the weak convergence and the strong convergence in the corresponding spaces respectively.

\section{Preliminaries}\label{sec2}

Let $p\in[1,\infty]$ and $\Lambda\subset\R^{N}$ be a measurable set. We use $L^{p}(\Lambda)$ to denote the usual Lebesgue space that $u:\Lambda \ri\R$ is a measurable function and $\|u\|_{L^{p}(\Lambda)}<\infty$,
where 
\begin{align*}
\|u\|_{L^{p}(\Lambda)}=
\begin{cases}
(\int_{\Lambda}|u|^{p}dx)^{\frac{1}{p}}&\quad\mbox{for}\quad p\in[1,\infty),\\
\text{esssup}_{x\in\Lambda}|u(x)|&\quad\mbox{for}\quad p=\infty.
\end{cases}
\end{align*}
For $p\in(1,N)$, let $\mathcal{D}^{1,p}(\Lambda)$ denote the closure of $C_{c}^{\infty}(\R^{N})$
with respect to the norm
$$
\|u\|_{L^{p}(\Lambda)}=\bigg(\int_{\Lambda}|u|^{p}dx\bigg)^{\frac{1}{p}},
$$
which is equivalent to 
$$
\mathcal{D}^{1,p}(\Lambda)=\bigg\{u:\Lambda\ri\R\quad\mbox{is measurable}:\|u\|_{L^{s}(\Lambda)}<\infty\bigg\}.
$$ 
We define the Sobolev space  $W^{1,p}(\Lambda)=L^{p}(\Lambda)\cap\mathcal{D}^{1,p}(\Lambda)$ endowed with the 
norm
$$
\|u\|_{W^{1,p}(\Lambda)}=(\|u\|_{L^{p}(\Lambda)}^{p}+\|\gr u\|_{L^{p}(\Lambda)}^{p})^{\frac{1}{p}}.
$$

We know that $W^{1,p}(\Lambda)$ is a  reflexive and separable Banach space, $W^{1,p}(\Lambda)$
is continuously embedded in the Lebesgue space $L^{t}(\Lambda)$ for any $t\in[p,p^{*}]$ and compactly embedded
in $L_{\text{loc}}^{t}(\Lambda)$ for any $t\in[p,p^{*})$, see \cite{A1975}.

For brevity, let $W^{1,p}(\Omega)$ equip with the norm
$$
\|u\|=(\|u\|_{p}^{p}+\|\gr u\|_{p}^{p})^{\frac{1}{p}}.
$$

Recall some basic properties for the following Schr\"odinger equation
\begin{equation}\label{pi}
\Lam u+ |u|^{p-2}u=|u|^{r-2}u\quad\mbox{in}\quad\R^{N}.
\end{equation}

By virtue of \cite{HL2014,ST2000,DW2023}, one can derive the following properties.
\begin{lem}\label{un}
Assume that $p<r<p^{*}$. Then up to translations, problem \eqref{pi} has a unique positive ground state solution $w\in C^{2}(\R,\R)$, which is radial, decreasing and  satisfies 
$$
w(x)\leq Ce^{-\kappa |x|},
$$
for some $\kappa>0$.
\end{lem}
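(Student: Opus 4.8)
The plan is to establish each asserted property in turn, treating existence and symmetry first and reserving the uniqueness statement for the end, since that is by far the most delicate point. For existence, I would work with the energy functional
$$
I(u)=\frac{1}{p}\int_{\R^{N}}\big(|\gr u|^{p}+|u|^{p}\big)\,dx-\frac{1}{r}\int_{\R^{N}}|u|^{r}\,dx
$$
on $W^{1,p}(\R^{N})$. Since $p<r<p^{*}$ the nonlinearity is subcritical, so $I$ has the mountain pass geometry and every Palais--Smale sequence is bounded. Restricting to the radial subspace $\Yr$, the compact Strauss embedding $\Yr\hookrightarrow L^{r}(\R^{N})$ recovers compactness and yields a nontrivial critical point; characterizing the ground state as the minimizer of $I$ on the Nehari manifold, and replacing $u$ by $|u|$, produces a nonnegative ground state, which is strictly positive by the strong maximum principle for the $p$-Laplacian (V\'azquez).

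Next I would address symmetry, monotonicity and regularity. Applying Schwarz symmetrization together with the P\'olya--Szeg\H{o} inequality, the ground state may be taken radially symmetric and nonincreasing; the minimizing property forces equality in P\'olya--Szeg\H{o}, so $w$ is genuinely radial and decreasing. For regularity, the quasilinear elliptic theory of DiBenedetto and Tolksdorf gives $w\in C^{1,\alpha}_{\mathrm{loc}}$; away from the critical points of $w$ the equation is uniformly elliptic, so bootstrapping yields $C^{2}$ regularity of the radial profile, which then solves the ordinary differential equation
$$
\big(s^{N-1}|w'|^{p-2}w'\big)'=s^{N-1}\big(|w|^{p-2}w-|w|^{r-2}w\big),\qquad w'(0)=0,\ \ \lim_{s\ri\infty}w(s)=0.
$$

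The uniqueness up to translation is the main obstacle, and here I would invoke the ODE uniqueness theory of Serrin--Tang \cite{ST2000} for positive radial ground states of quasilinear equations. The strategy is to reduce the problem to the shooting analysis of the above ODE and to verify the structural hypotheses of that theory for the nonlinearity $f(t)=-t^{p-1}+t^{r-1}$ on $t>0$: the crucial condition is the monotonicity of $t\mapsto f(t)/t^{p-1}=-1+t^{r-p}$, which holds precisely because $r>p$, together with the correct sign of $f$ near the origin and the existence of a single zero. Granting these, Serrin--Tang gives that the decaying radial solution is unique, which combined with the symmetry step yields uniqueness of $w$ up to translation. Finally, the exponential decay follows from a comparison argument: near infinity the equation is dominated by $\Lam w+|w|^{p-2}w\approx0$, and one constructs a radial supersolution of the form $Ce^{-\kappa|x|}$ for a suitable $\kappa>0$ and applies the weak comparison principle for the $p$-Laplacian to control $w$. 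I expect the verification of the Serrin--Tang hypotheses and the careful treatment of the critical set of $w$ in the regularity step to require the most care.
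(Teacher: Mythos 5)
The paper itself contains no proof of this lemma: it is stated as a quotation, ``By virtue of \cite{HL2014,ST2000,DW2023}, one can derive the following properties.'' Your proposal reconstructs exactly the content being outsourced --- variational existence with radial (Strauss) compactness as in \cite{HL2014,DW2023}, the Serrin--Tang ODE theory \cite{ST2000} for uniqueness, and a comparison argument for the exponential decay --- so you are on the route the paper intends, only made self-contained; what your version buys is an actual verification of the hypotheses of the quoted theorems, which the paper never performs. That said, two of your specific claims need repair before the sketch is a proof.

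First, the hypothesis you single out as ``crucial'' for \cite{ST2000}, namely monotonicity of $t\mapsto f(t)/t^{p-1}=-1+t^{r-p}$, is \emph{not} the Serrin--Tang uniqueness condition; that monotonicity only makes the Nehari projection unique, and it is known to be compatible with non-uniqueness of ground states (combined-power nonlinearities already show this). What must be checked in \cite{ST2000} is the sign structure of $f(u)=u^{r-1}-u^{p-1}$ (negative on $(0,1)$, positive on $(1,\infty)$) together with the Coffman--Kwong--McLeod--Serrin type condition that $u f'(u)/f(u)$ be non-increasing on the set where $f>0$; here $u f'(u)/f(u)=(r-1)+(r-p)/(u^{r-p}-1)$, which is indeed decreasing on $(1,\infty)$, so the theorem applies --- but you must verify this hypothesis, not the one you named. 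Second, your bootstrap to $C^{2}$ fails at the critical point of $w$ when $p>2$: integrating the radial equation near the origin gives $|w'(s)|^{p-2}w'(s)\sim -c\,s$, hence $w'(s)\sim -c'\,s^{1/(p-1)}$, so $w''$ blows up at $s=0$ and $w$ is only $C^{1,\alpha}$ there (this defect is present in the lemma's own statement, which should be read as $C^{2}$ away from the origin). Finally, note that ``uniqueness up to translations'' requires knowing that \emph{every} ground state is radial, not merely that some symmetrized minimizer is; equality in P\'olya--Szeg\H{o} yields this only through Brothers--Ziemer rigidity, whose hypotheses (no flat parts of the level sets) must be checked, or one must appeal to moving-plane symmetry results for the $p$-Laplacian. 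This is the thinnest point of your sketch, and it is precisely the kind of difficulty hidden behind the paper's bare citation.
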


We consider the limit problem of \eqref{p}
\begin{align}\label{li}
\begin{cases}
&-\Delta_{p} u=\lambda |u|^{p-2}u +|u|^{r-2}u\quad\mbox{in}\quad\R^{N},\\
&\int_{\R^{N}}|u|^{p}dx=a.
\end{cases}
\end{align}

It is well-known that the solution of \eqref{li} is the variational functional
$\I:\Y\ri\R$ restricted on the manifold 
$$
S_{*}(a):=\{u\in\Y:\int_{\R^{N}}|u|^{p}dx=a\},
$$
where 
$$
\I(u)=\frac{1}{p}\int_{\R^{N}}|\gr u|^{p}dx-\frac{1}{r}\int_{\R^{N}}|u|^{r}dx.
$$

Let $\lambda<0$. We define the following two functionals
$$
\Jl(u)=\frac{1}{p}\int_{\Omega}|\gr u|^{p}dx
+\frac{-\lambda}{p}\int_{\Omega}|u|^{p}dx-\frac{1}{r}\int_{\Omega}|u|^{r}dx
$$
and 
$$
\Il(u)=\frac{1}{p}\int_{\R^{N}}|\gr u|^{p}dx
+\frac{-\lambda}{p}\int_{\R^{N}}|u|^{p}dx-\frac{1}{r}\int_{\R^{N}}|u|^{r}dx.
$$

\begin{prop}\label{ma}
For any $a>0$, $m_{*}(a):=\inf\{\I(s):u\in S(a)\}$ is attained by a unique (up to a translation) positive radial minimizer $w_{\la}$ satisfying 
$$
w_{\la}(x)=(-\la)^{\frac{1}{r-p}}w((-\la)^{\frac{1}{p}}x),
$$
where $w_{\la}$ is the unique positive solution of 
$$
\Lam u=\la|u|^{p-2}u+|u|^{r-2}u\quad\mbox{in}\quad\R^{N},
$$
and $\la<0$ is determined by
\begin{align}\label{ma1}
\|w_{\la}\|^{p}_{L^{p}(\R^{N})}=(-\la)^{s_{p,r}}\|w\|_{L^{p}(\R^{N})}^{p},\quad s_{p,r}=\frac{p}{r-p}-\frac{N}{p}.
\end{align}
Moreover, there holds 
\begin{align}\label{ma2}
m_{*}(a)=(-\la)^{s_{p,r}+1}\I(w)=(\frac{a}{\|w\|_{L^{p}(\R^{N})}})^{\frac{s_{p,r}+1}{s_{p,r}}}\I(w).
\end{align}
\end{prop}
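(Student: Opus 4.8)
The plan is to prove the statement in two steps: attainment of the infimum, then explicit identification of the minimizer via the homogeneity of $\I$. Because $p<r<p_c$, the Gagliardo--Nirenberg inequality $\|u\|_{L^r(\R^N)}^r\le C\|\gr u\|_{L^p(\R^N)}^{\gamma r}\|u\|_{L^p(\R^N)}^{(1-\gamma)r}$ with $\gamma=\tfrac{N(r-p)}{pr}$ and $\gamma r=\tfrac{N(r-p)}{p}<p$ shows at once that $\I$ is coercive and bounded below on $S_*(a)$. The structural fact I would extract first is the exact scaling of the level: the transformation $u\mapsto B^{p/(r-p)}u(B\,\cdot)$ carries $S_*(1)$ bijectively onto $S_*(a)$ with $B=a^{1/(p\,s_{p,r})}$ and multiplies $\I$ by $a^{(s_{p,r}+1)/s_{p,r}}$, whence $m_*(a)=a^{(s_{p,r}+1)/s_{p,r}}m_*(1)$. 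Testing with the mass--preserving dilations $u^t(x)=t^{N/p}u(tx)$ gives $\I(u^t)=\tfrac{t^p}{p}\|\gr u\|_{L^p(\R^N)}^p-\tfrac{t^{N(r-p)/p}}{r}\|u\|_{L^r(\R^N)}^r$, and since $N(r-p)/p<p$ the negative term dominates as $t\to0^+$, so $m_*(1)<0$.

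For attainment I would take a minimizing sequence, replace each element by its Schwarz symmetrization (which fixes the $L^p$ and $L^r$ norms and does not increase $\|\gr u\|_{L^p(\R^N)}$), and so assume it is radially decreasing and bounded in $\Yr$. The compact embedding $\Yr\hookrightarrow L^r(\R^N)$ yields strong $L^r$ convergence to a weak limit $u$, and weak lower semicontinuity handles the gradient term. The only genuine obstacle is that the constraint lives at the non--compact endpoint of the embedding, so a priori only $\|u\|_{L^p(\R^N)}^p\le a$. I would close this gap with the strict subadditivity $m_*(a)<m_*(b)+m_*(a-b)$ for $0<b<a$, which follows immediately from $m_*(\cdot)=m_*(1)(\cdot)^{(s_{p,r}+1)/s_{p,r}}$ since $m_*(1)<0$ and the exponent exceeds $1$; this excludes splitting of mass, while $m_*(a)<0$ excludes vanishing, forcing $\|u\|_{L^p(\R^N)}^p=a$ and hence producing a radial minimizer.

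It remains to identify the minimizer. Any minimizer $u$ solves the Euler--Lagrange equation $\Lam u=\la|u|^{p-2}u+|u|^{r-2}u$ for some $\la\in\R$; testing with $u$ gives $\|\gr u\|_{L^p(\R^N)}^p=\la a+\|u\|_{L^r(\R^N)}^r$, while the Pohozaev/scaling identity $\tfrac{d}{dt}\I(u^t)\big|_{t=1}=0$ gives $\|\gr u\|_{L^p(\R^N)}^p=\gamma\|u\|_{L^r(\R^N)}^r$; subtracting and using $\gamma<1$ yields $\la a=(\gamma-1)\|u\|_{L^r(\R^N)}^r<0$, so $\la<0$. Thus $u$ (replaced by $|u|$ and positive by the strong maximum principle) is a positive radial ground state of the autonomous equation, so by Lemma \ref{un} applied after the inverse dilation it must equal $w_\la(x)=(-\la)^{1/(r-p)}w((-\la)^{1/p}x)$ up to translation; one checks directly that matching the coefficients of $|u|^{p-2}u$ and $|u|^{r-2}u$ in $\Lam w_\la$ forces precisely this amplitude and this dilation. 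A change of variables then gives $\|w_\la\|_{L^p(\R^N)}^p=(-\la)^{p/(r-p)-N/p}\|w\|_{L^p(\R^N)}^p=(-\la)^{s_{p,r}}\|w\|_{L^p(\R^N)}^p$, which is \eqref{ma1}; since $r<p_c$ is equivalent to $s_{p,r}>0$, the map $\la\mapsto(-\la)^{s_{p,r}}$ is a strictly increasing bijection of $(-\infty,0)$ onto $(0,\infty)$, so $\|w_\la\|_{L^p(\R^N)}^p=a$ determines $\la<0$ uniquely. Finally the same computation gives $\|\gr w_\la\|_{L^p(\R^N)}^p=(-\la)^{s_{p,r}+1}\|\gr w\|_{L^p(\R^N)}^p$ and $\|w_\la\|_{L^r(\R^N)}^r=(-\la)^{s_{p,r}+1}\|w\|_{L^r(\R^N)}^r$, both carrying the same power $s_{p,r}+1$, so $\I(w_\la)=(-\la)^{s_{p,r}+1}\I(w)$, and substituting $(-\la)^{s_{p,r}}=a/\|w\|_{L^p(\R^N)}^p$ yields \eqref{ma2}.

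The routine part is the scaling algebra, where the pleasant point is that the Dirichlet and the $L^r$ terms pick up the identical power $s_{p,r}+1$. The genuinely delicate step is the attainment, precisely because the $L^p$ constraint sits at the endpoint of the Sobolev embedding and mass may in principle escape to infinity; the homogeneity of $m_*$, hence the strict subadditivity, is exactly what rules this out, and verifying that the constrained minimizer is honestly a ground state (so that the uniqueness of Lemma \ref{un} applies) is where the substantive work lies.
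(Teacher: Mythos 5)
Your argument is correct, and it is considerably more self-contained than the paper's own proof, which consists of three sentences: attainment of $m_*(a)$ is simply quoted from \cite{ZZ2022Non}, the Lagrange multiplier is asserted to satisfy $\la<0$, and then the change of variables $w_\la(x)=(-\la)^{1/(r-p)}w((-\la)^{1/p}x)$, which puts solutions of \eqref{li} in one-to-one correspondence with solutions of \eqref{pi}, is combined with Lemma \ref{un} to finish. You reprove the quoted attainment directly (Gagliardo--Nirenberg coercivity, Schwarz symmetrization plus the compact embedding $\Yr\hookrightarrow L^{r}(\R^{N})$, and the homogeneity $m_*(a)=a^{(s_{p,r}+1)/s_{p,r}}m_*(1)$ with $m_*(1)<0$ to force $\|u\|_{L^{p}(\R^{N})}^{p}=a$; note that in the radial setting the strict monotonicity of $m_*$, which follows from the same homogeneity, already suffices, and full subadditivity is only indispensable in a non-symmetrized concentration-compactness argument), you derive $\la<0$ honestly from the Euler--Lagrange identity combined with the Pohozaev/dilation identity, and you record the scaling algebra behind \eqref{ma1} and \eqref{ma2} explicitly, including the pleasant fact that the gradient and $L^{r}$ terms carry the identical power $s_{p,r}+1$. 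What the paper's route buys is brevity; what yours buys is independence from \cite{ZZ2022Non} and a verifiable account of every constant (your version of \eqref{ma2} even corrects the paper's typographical omission of the exponent $p$ on $\|w\|_{L^{p}(\R^{N})}$). The one ingredient common to both proofs, and on which both lean in exactly the same way, is Lemma \ref{un}: the positive radial function produced by minimization must fall within the uniqueness class of that lemma for the identification with $w_\la$ to go through. You flag this as the substantive point, and your preparation for it (positivity via the strong maximum principle, radiality via symmetrization, negativity of $\la$) is precisely what is needed; the paper leaves the same step implicit.
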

\begin{proof}
From \cite{ZZ2022Non}, we know that $m_{*}(a)$ is attained for any $a>0$. Then there exists $(w_{\la},\la)$ being a solution of \eqref{li} and $\la<0$. Note that by the change of variable, the solutions of \eqref{pi} are in one-to-one correspondence with the solutions of \eqref{li}. Thus we can directly utilize Lemma \ref{un} to complete the proof. 
\end{proof}

Recall the following splitting lemma which is firstly proposed by  Struwe \cite{S1984}.
\begin{lem}\label{spl}
Let $\lambda<0$. If $\{u_{n}\}\subset \X$ is a $(PS)_{c}$ sequence of $\Jl$, then in the sense of a subsequence, 
there exist a nonnegative integer $k$, $u_{0}\in\X$, 
and $k$ sequences $\{y_{n}^{i}\}$ such that $|y_{n}^{i}|\ri\infty$ as $n\ri\infty$, $1\leq i\leq k$, such that $u_{0}$ is a weak solution of equation
\begin{align*}
&\Lam u=\la|u|^{p-2}u+|u|^{r-2}u\quad\mbox{in}\quad\Omega,\\
&u_{n}\wri u\quad\mbox{in}\quad\X,\\
&u_{n}=u_{0}+\sum_{i=1}^{k}w_{\la}(x+y_{n}^{i})\quad\mbox{strongly in}\quad\Y,\\
&\|u_{n}\|^{p}\ri \|u_{0}\|^{p}+k\|w_{\la}\|_{\Y}^{p},\\
&\Jl(u_{n})\ri\Jl(u_{0})+k\Il(w_{\la}).
\end{align*}
\end{lem}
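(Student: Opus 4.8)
The plan is to run the classical iterative profile decomposition of Struwe, adapted to the quasilinear operator $\Lam$. First I would show the $(PS)_c$ sequence is bounded in $\X$. Since $\la<0$, forming the combination $\Jl(u_n)-\frac1r\langle\Jl'(u_n),u_n\rangle$ cancels the nonlinearity and leaves the coercive quantity $(\frac1p-\frac1r)\big(\|\gr u_n\|_p^p+(-\la)\|u_n\|_p^p\big)$, which dominates $c\,\|u_n\|^p$ with $c>0$ because $\frac1p-\frac1r>0$ and $\min(1,-\la)>0$; since the right-hand side is $c+o_n(1)+o_n(1)\|u_n\|$, boundedness follows. Passing to a subsequence, $u_n\wri u_0$ in $\X$ and $u_n\ri u_0$ in $L^{t}_{\text{loc}}$ for $p\le t<p^{*}$. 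The delicate point is to pass to the limit in the quasilinear term so that $u_0$ is a weak solution of $\Lam u=\la|u|^{p-2}u+|u|^{r-2}u$ in $\Omega$; for this I would prove $\gr u_n\ri\gr u_0$ a.e. by a Boccardo--Murat truncation argument, testing $\Jl'(u_n)$ against $T_k(u_n-u_0)\varphi$ with $T_k$ a truncation and $\varphi$ a local cut-off, and using the strict monotonicity of $\xi\mapsto|\xi|^{p-2}\xi$ to force $\int(|\gr u_n|^{p-2}\gr u_n-|\gr u_0|^{p-2}\gr u_0)\cdot\gr(u_n-u_0)\ri0$ on compact sets.

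With a.e.\ convergence of the gradients in hand, a Brezis--Lieb argument yields $\|\gr u_n\|_p^p=\|\gr u_0\|_p^p+\|\gr(u_n-u_0)\|_p^p+o_n(1)$ and the analogous splittings of the $L^p$ and $L^r$ norms, so that $v_n^1:=u_n-u_0$ satisfies $v_n^1\wri0$, $\Jl(v_n^1)\ri c-\Jl(u_0)$ and $\Jl'(v_n^1)\ri0$. If $v_n^1\ri0$ strongly the decomposition holds with $k=0$; otherwise $\liminf\|v_n^1\|>0$. A Lions dichotomy then excludes vanishing, since $\sup_y\int_{B_1(y)}|v_n^1|^p\ri0$ would give $v_n^1\ri0$ in $L^r$, and then $\langle\Jl'(v_n^1),v_n^1\rangle=o_n(1)$ would force $\|\gr v_n^1\|_p^p+(-\la)\|v_n^1\|_p^p\ri0$, i.e.\ $v_n^1\ri0$, a contradiction. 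Hence there exist $y_n^1$ with $\int_{B_1(-y_n^1)}|v_n^1|^p\ge\delta>0$, and the translates satisfy $v_n^1(\cdot-y_n^1)\wri w^1\neq0$. Because $\X$ embeds compactly only locally, $|y_n^1|\ri\infty$; and since $\Omega$ is exterior with bounded complement, the shifted domains exhaust $\R^N$, so $w^1$ solves the limit equation on all of $\R^N$. Invoking the uniqueness statements of Lemma \ref{un} and Proposition \ref{ma}, I would identify $w^1$ with $w_\la$, so $v_n^1\approx w_\la(\cdot+y_n^1)$.

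Finally I would iterate: set $v_n^2:=v_n^1-w_\la(\cdot+y_n^1)$ and check, again via the Brezis--Lieb splitting and the translation invariance of $\Il$, that $v_n^2\wri0$ with $\Jl(v_n^2)\ri c-\Jl(u_0)-\Il(w_\la)$ and $\Jl'(v_n^2)\ri0$; running the dichotomy on $v_n^2$ either stops the process or produces a further bubble $w_\la(\cdot+y_n^2)$ with $|y_n^2|\ri\infty$ and $|y_n^1-y_n^2|\ri\infty$. Each extracted bubble contributes a fixed positive energy $\Il(w_\la)>0$ and a fixed positive amount of $\Y$-norm (every nontrivial solution of the limit problem has norm bounded away from zero), so by boundedness of $\{\|u_n\|\}$ the scheme terminates after finitely many steps $k$, giving the asserted strong decomposition and the norm and energy identities. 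I expect the main obstacle to be the a.e.\ convergence of the gradients: unlike the semilinear case $p=2$, one cannot pass to the limit in $|\gr u_n|^{p-2}\gr u_n$ by weak convergence alone, and the monotonicity/truncation argument must be executed carefully both for the free limit $u_0$ and around each translating bubble so that every Brezis--Lieb splitting and the weak-solution property are rigorously justified.
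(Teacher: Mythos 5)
The paper itself gives no proof of this lemma: it is quoted as a known Struwe-type splitting result (citing \cite{S1984}), and the only $p$-Laplacian-specific ingredient the paper ever supplies --- a.e.\ convergence of gradients via the monotonicity inequality $\langle|x|^{p-2}x-|y|^{p-2}y,x-y\rangle\ge C_{0}|x-y|^{p}$ of \cite{D1998} --- appears later, in the radial compactness Lemma \ref{cor}. Your reconstruction (boundedness from $\Jl(u_{n})-\frac{1}{r}\langle\Jl'(u_{n}),u_{n}\rangle$, a.e.\ gradient convergence, Brezis--Lieb splitting, Lions dichotomy, iteration terminating because each bubble carries a fixed quantum of norm and energy) is exactly the standard route this citation points to, and those steps are sound; note also that since the paper assumes $p\ge2$ you do not need the full Boccardo--Murat truncation machinery --- testing with a cut-off times $(u_{n}-u_{0})$ and using the displayed strong monotonicity already gives strong local convergence of the gradients, which is precisely how the paper argues in Lemma \ref{cor}.

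The genuine gap is the identification of the profiles with $w_{\la}$. Your dichotomy produces a nontrivial weak solution $w^{1}$ of $\Lam u=\la|u|^{p-2}u+|u|^{r-2}u$ in $\R^{N}$, but Lemma \ref{un} and Proposition \ref{ma} assert uniqueness only within the class of \emph{positive} (ground state) solutions, while a priori $w^{1}$ may change sign. Indeed, for a genuinely sign-changing $(PS)_{c}$ sequence the conclusion ``every bubble equals $w_{\la}$'' is not obtainable by this argument at all (one can build $(PS)$ sequences whose profile is a nodal solution of the limit equation by translating such a solution to infinity); this is why the paper only ever applies the lemma to \emph{nonnegative} $(PS)_{c}$ sequences in Lemma \ref{co}. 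To close the gap you must use that hypothesis: since $|y_{n}^{1}|\ri\infty$, the translates $u_{0}(\cdot-y_{n}^{1})$ converge weakly to $0$, so $w^{1}$ is the weak limit of the nonnegative functions $u_{n}(\cdot-y_{n}^{1})$, hence $w^{1}\ge0$ by weak closedness of the nonnegative cone; then the strong maximum principle for the $p$-Laplacian gives $w^{1}>0$, and only now do Lemma \ref{un} and Proposition \ref{ma} identify $w^{1}$ with $w_{\la}$. The same care is needed at every later stage of the iteration. A secondary, smaller issue you flag but do not resolve: the assertion $\Il'(v_{n}^{1})\ri0$ requires a Brezis--Lieb-type splitting of the vector field $|\gr u_{n}|^{p-2}\gr u_{n}$ itself, not merely of the norms; for $p\neq2$ this is a nontrivial lemma in its own right and should be stated and proved (or cited) explicitly rather than folded silently into ``the analogous splittings.''
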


Motivated by \cite{ZZ2022}, we introduce the map $\Psi_{a,R}:\R^{N}\ri S_{*}(a)$ defined as
$$
\Psi_{a,R}(y)=\frac{\sqrt[p]{a}}{\|\eta(\frac{\cdot}{R})w_{\la}(\cdot-y)\|_{L^{p}(\R^{N})}}
\eta(\frac{\cdot}{R})w_{\la}(\cdot-y)\quad\mbox{for}
\quad y\in\R^{N},
$$  
where $a,\,R>0$, $p<r<p_{c}$, $\la=\la(a)$ is determined by Proposition \ref{ma}, $w_{\la}$ is a positive minimizer of \eqref{li} and $\eta:\R^{N}\ri[0,1]$ is a smooth radial function such that
$$
\eta\equiv0\quad\mbox{for}\quad |x|\leq1,\quad\eta(x)\equiv1\quad\mbox{for}\quad|x|\geq2.
$$
We present some properties of $\Psi_{a,R}$
as follows.

\begin{lem}\label{be}
There holds that

$(i)$ $\Psi_{a,R}\in C(\R^{N},S_{*}(a))$ for $R>0$;

$(ii)$ $\I(\Psi_{a,R}(y))\ri m_{*}(a)$ as $|y|\ri\infty$ uniformly for $R>0$ is bounded;

$(iii)$ $\Psi_{a,R}\ri w_{\la}$ in $\Y$ as $R\ri0^{+}$ uniformly for $y\in\R^{N}$.
\end{lem}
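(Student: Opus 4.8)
The plan is to treat $\Psi_{a,R}(y)$ as a normalized, truncated translate of the fixed ground state $w_{\la}$, and to reduce all three assertions to one quantitative estimate on the error introduced by the cut-off $\eta(\tfrac{\cdot}{R})$. Writing $c_{a,R}(y)=\sqrt[p]{a}\,\|\eta(\tfrac{\cdot}{R})w_{\la}(\cdot-y)\|_{L^{p}(\R^{N})}^{-1}$, so that $\Psi_{a,R}(y)=c_{a,R}(y)\,\eta(\tfrac{\cdot}{R})w_{\la}(\cdot-y)$, one has $\|\Psi_{a,R}(y)\|_{L^{p}(\R^{N})}^{p}=a$ by construction, hence $\Psi_{a,R}(y)\in S_{*}(a)$. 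Since $w_{\la}>0$ and $\eta\equiv1$ outside $B_{2R}(0)$, the truncated function is a nonzero element of $\Y$, so the denominator is strictly positive and $c_{a,R}(y)$ is well defined. Throughout I will use that $w_{\la}$ inherits from $w$ (Lemma \ref{un}, Proposition \ref{ma}, together with standard regularity for \eqref{pi}) the bounds $w_{\la}\in L^{\infty}(\R^{N})$ and $|\gr w_{\la}|\in L^{\infty}(\R^{N})$, as well as the exponential decay $w_{\la}(x)+|\gr w_{\la}(x)|\le Ce^{-\kappa'|x|}$ for some $\kappa'>0$. Assertion $(i)$ is then the easy part: at fixed $R>0$ the map $y\mapsto w_{\la}(\cdot-y)$ is continuous into $\Y$ (continuity of translations, valid since $w_{\la}\in\Y$), multiplication by the fixed bounded Lipschitz factor $\eta(\tfrac{\cdot}{R})$ preserves continuity, and $y\mapsto\|\eta(\tfrac{\cdot}{R})w_{\la}(\cdot-y)\|_{L^{p}(\R^{N})}$ is continuous, positive, and tends to $\|w_{\la}\|_{L^{p}(\R^{N})}=a^{1/p}>0$ as $|y|\to\infty$, hence bounded away from $0$.

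The core of the argument is to estimate $E_{R}(y):=\eta(\tfrac{\cdot}{R})w_{\la}(\cdot-y)-w_{\la}(\cdot-y)$ in $\Y$. For the $L^{p}$ part, $\eta(\tfrac{\cdot}{R})-1$ is supported in $B_{2R}(0)$ and bounded by $1$, so $\|E_{R}(y)\|_{L^{p}(\R^{N})}^{p}\le\int_{B_{2R}(0)}|w_{\la}(x-y)|^{p}\,dx$. For the gradient, the Leibniz expansion gives $\gr E_{R}(y)=(\eta(\tfrac{\cdot}{R})-1)(\gr w_{\la})(\cdot-y)+\tfrac{1}{R}(\gr\eta)(\tfrac{\cdot}{R})\,w_{\la}(\cdot-y)$, producing two contributions: a term supported in $B_{2R}(0)$ controlled by $\int_{B_{2R}(0)}|\gr w_{\la}(x-y)|^{p}\,dx$, and the cut-off term, supported in the annulus $B_{2R}(0)\setminus B_{R}(0)$, whose $p$-th power of the $L^{p}$ norm is at most $CR^{-p}\|\gr\eta\|_{\infty}^{p}\,|B_{2R}(0)|\,\sup_{B_{2R}(0)}|w_{\la}(\cdot-y)|^{p}\le C'R^{N-p}\sup_{B_{2R}(0)}|w_{\la}(\cdot-y)|^{p}$. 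Here the hypothesis $p<N$ is essential: it makes $R^{N-p}$ stay bounded (indeed tend to $0$) as $R\to0^{+}$ and for $R$ in bounded sets.

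Both limits now follow from this estimate. For $(iii)$, as $R\to0^{+}$ the three pieces are bounded by $C\|w_{\la}\|_{\infty}^{p}R^{N}$, $C\|\gr w_{\la}\|_{\infty}^{p}R^{N}$ and $C'\|w_{\la}\|_{\infty}^{p}R^{N-p}$, all independent of $y$ and tending to $0$; hence $\|E_{R}(y)\|_{\Y}\to0$ and $c_{a,R}(y)\to1$ uniformly in $y$, giving $\|\Psi_{a,R}(y)-w_{\la}(\cdot-y)\|_{\Y}\to0$ uniformly in $y$, which is the assertion up to the translation invariance of $\|\cdot\|_{\Y}$. For $(ii)$, fix $R_{0}$ and restrict to $0<R\le R_{0}$; for $|y|>2R_{0}$ and $x\in B_{2R}(0)$ one has $|x-y|\ge|y|-2R_{0}$, so the exponential decay yields $\sup_{B_{2R}(0)}\big(w_{\la}(\cdot-y)+|\gr w_{\la}(\cdot-y)|\big)\le Ce^{-\kappa'(|y|-2R_{0})}$, whence $\|E_{R}(y)\|_{\Y}\to0$ as $|y|\to\infty$ uniformly in $R\le R_{0}$. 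Consequently $c_{a,R}(y)\to1$, and since $\I$ is continuous on $\Y$ and translation invariant with $\I(w_{\la}(\cdot-y))=\I(w_{\la})=m_{*}(a)$, we conclude $\I(\Psi_{a,R}(y))\to m_{*}(a)$ as $|y|\to\infty$ uniformly in $R\le R_{0}$.

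The only delicate point, and the step I expect to demand the most care, is the cut-off gradient term $\tfrac{1}{R}(\gr\eta)(\tfrac{\cdot}{R})w_{\la}(\cdot-y)$: its naive $L^{p}$ bound carries a diverging factor $R^{-p}$, absorbed only through the volume factor $|B_{2R}(0)|\sim R^{N}$ together with $p<N$. Obtaining the two required uniformities rests entirely on keeping the constants in this estimate free of the relevant parameter—uniform in $R\le R_{0}$ for $(ii)$, which is where the exponential decay of $w_{\la}$ and $\gr w_{\la}$ is genuinely used, and uniform in $y$ for $(iii)$, which only needs the global sup-bounds.
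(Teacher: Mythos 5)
Your proof is correct, but it runs on different fuel than the paper's. The paper's proof of $(ii)$ changes variables to $z=x-y$ and invokes dominated convergence, while for $(iii)$ it exploits the fact that $w_{\la}$ is radial and decreasing (so that $\int_{B_{2R}(-y)}|w_{\la}|^{p}\,dz\leq\int_{B_{2R}(0)}|w_{\la}|^{p}\,dz$, which recentres the ball and gives uniformity in $y$ for free) together with a H\"older inequality with exponents $p^{*}/p$ and $N/p$ that absorbs the dangerous factor $R^{-p}$ via $|B_{2R}|^{p/N}\sim R^{p}$; in this way the paper only ever uses integrability of $|w_{\la}|^{p}$, $|w_{\la}|^{p^{*}}$ and $|\gr w_{\la}|^{p}$. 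You instead run a single quantitative estimate on $E_{R}(y)$ using the pointwise bounds $w_{\la},|\gr w_{\la}|\in L^{\infty}$ and exponential decay, absorbing $R^{-p}$ through the volume factor $R^{N}$ and the hypothesis $p<N$. Each route buys something: yours is more transparent and, notably, genuinely delivers the uniformity in bounded $R$ asserted in $(ii)$ --- the paper's dominated-convergence argument, as written, only concludes convergence ``for each $R>0$'', so your exponential-decay estimate actually closes a uniformity claim the paper leaves implicit. The paper's route, on the other hand, needs less regularity: it never uses boundedness or decay of $\gr w_{\la}$. That is the one caveat to flag in your write-up: Lemma \ref{un} of the paper states only $w\in C^{2}$ and $w(x)\leq Ce^{-\kappa|x|}$, so your bounds $|\gr w_{\la}|\in L^{\infty}$ and $|\gr w_{\la}(x)|\leq Ce^{-\kappa'|x|}$ are an extra input. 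They are indeed standard for ground states of \eqref{pi} (they follow from the radial ODE analysis in the Serrin--Tang circle of results the paper cites), so this is not a gap, but you should either cite that fact explicitly or, for $(ii)$, replace the pointwise decay of $\gr w_{\la}$ by the cheaper observation that $\int_{B_{2R_{0}}(-y)}|\gr w_{\la}|^{p}\,dz\to0$ as $|y|\to\infty$ simply because $|\gr w_{\la}|^{p}\in L^{1}(\R^{N})$ and the domain of integration escapes to infinity.
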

\begin{proof}
$(i)$ It is easy to conclude this assertion. We omit it here.

$(ii)$ By the change of variable $z=x-y$, we derive 
$$
\int_{\R^{N}}|\eta(\frac{x}{R})w_{\la}(x-y)|^{p}dx
=\int_{\R^{N}}|\eta(\frac{z+y}{R})w_{\la}(z)|^{p}dz
$$
From the Lebesgue dominated convergence theorem, we deduce that
\begin{align}\label{beii1}
\int_{\R^{N}}|\eta(\frac{z+y}{R})w_{\la}(z)|^{p}dz\ri
\int_{\R^{N}}|w_{\la}(z)|^{p}dz=a\quad\mbox{as}\quad |y|\ri\infty.
\end{align}
Similarly, by the change of variable $z=x-y$, one has that 
\begin{align}\label{beii2}
\int_{\R^{N}}|\eta(\frac{x}{R})w_{\la}(x-y)|^{r}dx
=\int_{\R^{N}}|\eta(\frac{z+y}{R})w_{\la}(z)|^{r}dz\ri
\int_{\R^{N}}|w_{\la}(z)|^{r}dz\quad\mbox{as}\quad|y|\ri\infty.
\end{align} 
Besides, we can see that
\begin{align}\label{beii3}
&\quad\int_{\R^{N}}|\gr(\eta(\frac{x}{R})w_{\la}(x-y)-w_{\la}(x-y))|^{p}dx\nonumber\\
&=\int_{\R^{N}}|\gr[(\eta(\frac{z+y}{R})-1)w_{\la}(z)]|^{p}dz\nonumber\\
&=\int_{\R^{N}}|\frac{1}{R}\gr\eta(\frac{z+y}{R})w_{\la}(z)+(\eta(\frac{z+y}{R})-1)\gr w_{\la}(z)|^{p}dz\nonumber\\
&\leq\frac{2^{p-2}}{R^{p}}\int_{\R^{N}}|\gr\eta(\frac{z+y}{R})|^{p}|w_{\la}(z)|^{p}dz
+2^{p-1}\int_{\R^{N}}|\eta(\frac{z+y}{R})-1|^{p}|\gr w_{\la}(z)|^{p}dz
\ri0\quad\mbox{as}\quad|y|\ri\infty.
\end{align}
In the light of \eqref{beii1}-\eqref{beii3}, we can obtain that for each $R>0$,
$$
\I(\Psi_{a,R}(y))\ri m_{*}(a)\quad\mbox{as}\quad |y|\ri\infty.
$$ 

$(iii)$ 
By the fact that $w_{\la}$ is radial and decreasing, we have that for each $a>0$, 
\begin{align}\label{beiii1}
&\quad\int_{\R^{N}}|\eta(\frac{x}{R})w_{\la}(x-y)-w_{\la}(x-y)|^{p}dx\nonumber\\
&=\int_{\R^{N}}|(\eta(\frac{z+y}{R})-1)w_{\la}(z)|^{p}dx\nonumber\\
&\leq\int_{B_{2R}(-y)}|w_{\la}|^{p}dz\leq\int_{B_{2R}(0)}|w_{\la}|^{p}dz\ri0\quad\mbox{as}\quad R\ri0.
\end{align}
On the other hand, one can deduce from the H\"older inequality that
\begin{align}\label{beiii2}
&\quad\int_{\R^{N}}|\gr[\eta(\frac{x}{R})w_{\la}(x-y)-w_{\la}(x-y)]|^{p}dx\nonumber\\
&=\int_{\R^{N}}|\frac{1}{R}\gr\eta(\frac{x}{R})w_{\la}(x-y)+(\eta(\frac{x}{R})-1)\gr w_{\la}(x-y)|^{p}dx\nonumber\\
&=\int_{\R^{N}}|\frac{1}{R}\gr\eta(\frac{z+y}{R})w_{\la}(z)+(\eta(\frac{z+y}{R})-1)\gr w_{\la}(z)|^{p}dx\nonumber\\
&\leq\frac{2^{p-1}}{R^{p}}\int_{B_{2R}(-y)}|w_{\la}|^{p}dz+
2^{p-1}\int_{B_{2R}(-y)}|\gr w_{\la}|^{p}dz\nonumber\\
&\leq\frac{2^{p-1}}{R^{p}}\bigg(\int_{B_{2R}(-y)}|w_{\la}|^{p^{*}}dz\bigg)^{\frac{p}{p^{*}}}
\bigg(\int_{B_{R}(y)}dz\bigg)^{\frac{p}{N}}+
2^{p-1}\int_{B_{2R}(-y)}|\gr w_{\la}|^{p}dz\nonumber\\
&\leq C(N)\bigg(\int_{B_{2R}(-y)}|w_{\la}|^{p^{*}}dz\bigg)^{\frac{p}{p^{*}}}+
2^{p-1}\int_{B_{2R}(-y)}|\gr w_{\la}|^{p}dz\nonumber\\
&\ri0\quad\mbox{as}\,\, R\ri0
\quad\mbox{uniformly for}\quad y\in\R^{N}.
\end{align}
\eqref{beiii1} and \eqref{beiii2} guarantee that
$$
\Psi_{a,R}\ri w_{\la}\,\,\mbox{ in}\,\, \Y\,\,\mbox{ as}\,\, R\ri0^{+}\,\,\mbox{ uniformly for}\,\, y\in\R^{N}.
$$
\end{proof}

\begin{lem}\label{min}
We have that $m(a)=m_{*}(a)<0$.
\end{lem}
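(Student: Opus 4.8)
The plan is to prove the two inequalities $m(a)\ge m_{*}(a)$ and $m(a)\le m_{*}(a)$ separately, and to verify the strict negativity $m_{*}(a)<0$ by a mass-preserving scaling argument. For the negativity I would fix any $u\in S_{*}(a)$ (for instance $u=w_{\la}$) and consider the dilations $u_{t}(x):=t^{N/p}u(tx)$, which satisfy $u_{t}\in S_{*}(a)$ for every $t>0$ since the $L^{p}$-mass is invariant. A direct change of variables gives
\begin{equation*}
\I(u_{t})=\frac{t^{p}}{p}\int_{\R^{N}}|\gr u|^{p}\,dx-\frac{t^{N(r-p)/p}}{r}\int_{\R^{N}}|u|^{r}\,dx .
\end{equation*}
Because $r<p_{c}=p+\frac{p^{2}}{N}$ forces $N(r-p)/p<p$, the negative term dominates as $t\to0^{+}$, so $\I(u_{t})<0$ for $t$ small; hence $m_{*}(a)\le\I(u_{t})<0$.

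For the lower bound $m(a)\ge m_{*}(a)$ I would use that every admissible $u\in S(a)$ obeys the homogeneous boundary condition $u=0$ on $\partial\Omega$, so its extension $\bar{u}$ by zero to all of $\R^{N}$ belongs to $\Y=W^{1,p}(\R^{N})$, lies in $S_{*}(a)$, and satisfies $\J(u)=\I(\bar{u})$ since all integrals are carried by $\Omega$. Taking the infimum over $u\in S(a)$ then yields $m(a)\ge m_{*}(a)$.

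The reverse inequality $m(a)\le m_{*}(a)$ is the heart of the statement, and here I would exploit the test functions $\Psi_{a,R}$ from Lemma \ref{be}. Since $\Omega^{c}=\R^{N}\setminus\Omega$ is bounded, fix $R_{0}>0$ with $\Omega^{c}\subset B_{R_{0}}(0)$ and choose any $R>R_{0}$. By construction $\eta(\cdot/R)$ vanishes on $B_{R}(0)\supset\overline{\Omega^{c}}$, so $\Psi_{a,R}(y)$ is supported in $\Omega$ for every $y\in\R^{N}$; consequently its restriction to $\Omega$ belongs to $S(a)$ and $\J(\Psi_{a,R}(y))=\I(\Psi_{a,R}(y))$. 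Invoking Lemma \ref{be}$(ii)$, $\I(\Psi_{a,R}(y))\to m_{*}(a)$ as $|y|\to\infty$, so for every $\varepsilon>0$ one may pick $|y|$ large enough that $\J(\Psi_{a,R}(y))<m_{*}(a)+\varepsilon$; letting $\varepsilon\to0$ gives $m(a)\le m_{*}(a)$. Combining the three facts yields $m(a)=m_{*}(a)<0$. The main obstacle is precisely this last step: producing admissible competitors inside $\Omega$ whose energy is arbitrarily close to the whole-space value $m_{*}(a)$. The boundedness of $\Omega^{c}$ is what makes it possible to translate the optimal profile $w_{\la}$ far from the obstacle, while the uniform decay estimate of Lemma \ref{be}$(ii)$ guarantees that the energy lost through the cutoff $\eta(\cdot/R)$ does not prevent the limit from reaching $m_{*}(a)$.
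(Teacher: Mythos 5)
Your proposal is correct and follows essentially the same path as the paper: the inequality $m_{*}(a)\leq m(a)$ comes from viewing elements of $S(a)$, extended by zero, as elements of $S_{*}(a)$, and the reverse inequality comes from the test functions $\Psi_{a,R}(y)$, which are supported in $\Omega$ once $\R^{N}\setminus\Omega\subset B_{R}(0)$, combined with Lemma \ref{be}(ii) as $|y|\to\infty$. The only difference is that you also give a self-contained mass-preserving scaling proof of $m_{*}(a)<0$ (correct, since $r<p+\frac{p^{2}}{N}$ makes the exponent $N(r-p)/p$ smaller than $p$), a point the paper's proof of this lemma does not argue explicitly but instead inherits from Proposition \ref{ma} and the sign of $\I(w)$.
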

\begin{proof}
Exploiting the fact $S(a)\subset S_{*}(a)$, we obtain that $m_{*}(a)\leq m(a)$. Next we show that $m(a)\leq m_{*}(a)$. Noticing that $\R^{N}\backslash\Omega$ is bounded, then there exists $R>0$ such that
$\R^{N}\backslash\Omega\subset B_{R}(0)$. Since $\eta(\frac{x}{R})\equiv0$ for $|x|\leq R$, by the meaning of $\Psi_{a,R}(y)$ we can see that $\supp(\Psi_{a,R}(y))\subset B_{R}^{c}(0)\subset\Omega$ for each $y\in\R^{N}$, which combined with $\Psi_{a,R}(y)\in S_{*}(a)$ implies that $\Psi_{a,R}(y)\in S(a)$ for any $y\in\R^{N}$.
Using $\Psi_{a,R}(y)\subset S(a)$ for any $y\in\R^{N}$ and Lemma \ref{be}-$(ii)$, we conclude that
$$
m(a)\leq\lim_{|y|\ri\infty}\J(\Psi_{a,R}(y))=\lim_{|y|\ri\infty}\I(\Psi_{a,R}(y))=m_{*}(a).
$$
\end{proof}

\begin{proof}[{\bf Proof of Theorem \ref{thm1}}]
On contrary we assume that there exists $u\in S(a)$ such that $\J(u)=m(a)$. We may assume that $u\geq0$ in $\Omega$ due to $|u|\in S(a)$ and $\J(|u|)=m(a)$. It follows from Lemma \ref{min} that 
$$
\I(u)=\J(u)=m(a)=m_{*}(a).
$$
Thus $u$ is a minimizer of $\I$ on $S_{*}(a)$. Then there exists $\la<0$ such that $u$ is a weak solution of the equation
$$
\Lam u=\la |u|^{p-2}u+|u|^{r-2}\quad\mbox{in}\quad\R^{N}.
$$
By the maximum principle and the fact $u\geq0$ in $\Y$ we can conclude that $u>0$ in $\R^{N}$, which is a contradiction due to $u\equiv0$ in $\R^{N}\setminus\Omega$. The proof is finished.
\end{proof}

\section{Compactness results}\label{sec3}
In this section, we are devoted to characterize the compactness of Palais-Smale sequence at some levels close to $m(a)$.

Recall that the Gagliardo-Nirenberg inequality: for any $r\in(p,p^{*})$, there exists a sharp constant $C_{N,p,r}$ such that
\begin{align}\label{gn}
\|u\|_{L^{r}(\R^{N})}\leq C_{N,p,r}\|\gr u\|^{\gamma_{p,r}}_{L^{p}(\R^{N})}\|u\|^{1-\gamma_{p,r}}_{L^{p}(\R^{N})}\quad\mbox{for any}\quad u\in\Y,
\end{align}
where
$$
\gamma_{p,r}:=N(\frac{1}{p}-\frac{1}{r})=\frac{N(r-p)}{rp}.
$$

\begin{lem}\label{co}
Suppose that $a>0$, $p<r<p_{c}$ and $\Omega\subset\R^{N}$ is an exterior domain. Then there exists a positive constant $\xi=\xi(a)\in(p^{-1/s_{p}},1)$ depending on a such that if $\{u_{n}\}\subset S(a)$ is a nonnegative
$(PS)_{c}$ sequence of $\J|_{S(a)}$ at the level $c$ with $m_{*}(a)<c<\xi m_{*}(a)$, then up to  a subsequence there exists $u\in S(a)$ such that
$$
u_{n}\ri u_{0}\quad\mbox{in}\quad\X.
$$ 
Besides, $u_{0}$ is a positive solution of \eqref{p} with $\la<0$.
\end{lem}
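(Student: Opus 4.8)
The plan is to adapt the constrained-$(PS)$ scheme of \cite{ZZ2022,YTZ2023} to the quasilinear operator $\Lam$, the genuinely new point being the passage to the limit in the $p$-Laplacian.

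\emph{Boundedness and the multiplier.} By the Lagrange multiplier rule for $\J|_{S(a)}$ there are $\la_n\in\R$ with $\Lam u_n-\la_n|u_n|^{p-2}u_n-|u_n|^{r-2}u_n\to0$ in $\X^{*}$. Since $p<r<p_c$, one has $r\gamma_{p,r}=\tfrac{N(r-p)}{p}<p$, so inserting \eqref{gn} into $\J(u_n)=\tfrac1p\|\gr u_n\|_p^p-\tfrac1r\|u_n\|_r^r$ and using $\|u_n\|_p^p=a$ shows $\J$ is coercive on $S(a)$; hence $\{u_n\}$ is bounded in $\X$ and, along a subsequence, $u_n\wri u_0$. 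Testing the equation with $u_n$ gives $\la_n a=\|\gr u_n\|_p^p-\|u_n\|_r^r+o_n(1)$, so $\{\la_n\}$ is bounded and $\la_n\to\la$ up to a subsequence.

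\emph{Sign of $\la$ and reduction to Lemma \ref{spl}.} Writing $A=\lim\|\gr u_n\|_p^p$ and $B=\lim\|u_n\|_r^r$, the relation $\tfrac1pA-\tfrac1rB=c<0$ forces $A<B$, whence $\la a=A-B<0$ and $\la<0$. Since $\la_n\to\la$ and $\{|u_n|^{p-2}u_n\}$ is bounded in $L^{p'}$, the error $(\la_n-\la)|u_n|^{p-2}u_n\to0$ in $\X^{*}$, so $\{u_n\}$ is an \emph{unconstrained} $(PS)_d$ sequence for $\Jl$ with $d=c-\tfrac{\la}{p}a$. Here lies the genuinely quasilinear step: to identify $u_0$ as a weak solution of $\Lam u=\la|u|^{p-2}u+|u|^{r-2}u$ and to feed Lemma \ref{spl}, I must upgrade $u_n\wri u_0$ to $\gr u_n\to\gr u_0$ a.e., which I would obtain by testing with a truncation of $u_n-u_0$ and invoking the monotonicity of $\xi\mapsto|\xi|^{p-2}\xi$. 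I expect this a.e.\ gradient convergence to be the main obstacle, as it is exactly where the nonlinearity of $\Lam$ prevents a one-line weak-continuity argument.

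\emph{Bubble count and conclusion.} Lemma \ref{spl} now produces an integer $k\ge0$ with $a=\|u_0\|_p^p+k\|w_{\la}\|_p^p$ and $d=\Jl(u_0)+k\Il(w_{\la})$. The Nehari identity for $u_0$ gives $\Jl(u_0)=(\tfrac1p-\tfrac1r)\|u_0\|_r^r\ge0$, while Proposition \ref{ma}, together with $\Il(w_{\la})=\tfrac1N\|\gr w_{\la}\|_p^p$ and $\|w_{\la}\|_p^p=(-\la)^{s_{p,r}}\|w\|_p^p$, turns the two constraints (after minimizing over the admissible range of $-\la$) into $c\ge k^{-1/s_{p,r}}m_{*}(a)$, with equality only if $u_0=0$ and the $k$ bubbles carry equal mass. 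The strict inequality $m_{*}(a)<c$ excludes a single full-mass bubble ($k=1$, $u_0=0$, which would force $c=m_{*}(a)$), the upper bound $c<\xi m_{*}(a)$ with $\xi>p^{-1/s_{p,r}}$ excludes the many-bubble cases, and the delicate remaining case of a nontrivial $u_0$ escaping together with one bubble is ruled out using that $m_{*}$ is not attained on $\Omega$ (Theorem \ref{thm1}), which makes $\J(u_0)>m_{*}(\|u_0\|_p^p)$ strict. Hence $k=0$, so $u_n\to u_0$ strongly in $\X$ and $\|u_0\|_p^p=a$, i.e.\ $u_0\in S(a)$ solves \eqref{p} with $\la<0$; finally $u_n\ge0$ yields $u_0\ge0$, and the strong maximum principle gives $u_0>0$.
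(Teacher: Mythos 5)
Your skeleton --- bounded multipliers, $\la<0$, passage to an unconstrained $(PS)$ sequence for $\Jl$, the splitting Lemma \ref{spl}, then a bubble count --- is the same as the paper's, and your plan for the a.e.\ convergence of gradients is reasonable (the paper simply invokes Lemma \ref{spl} as a black box at this point; the truncation/monotonicity argument you describe appears only later, in the proof of Lemma \ref{cor}). The genuine gap is in the bubble count. Your central inequality $c\ge k^{-1/s_{p,r}}m_{*}(a)$ does not follow from the two constraints $a=\|u_{0}\|_{p}^{p}+k\|w_{\la}\|_{p}^{p}$ and $c=\J(u_{0})+k\,m_{*}(\|w_{\la}\|_{p}^{p})$: writing $m_{*}(t)=\tau_{0}\,t^{(s_{p,r}+1)/s_{p,r}}$ with $\tau_{0}<0$, they only give $c\ge\tau_{0}\bigl[(a-kt)^{(s_{p,r}+1)/s_{p,r}}+kt^{(s_{p,r}+1)/s_{p,r}}\bigr]$ with $t=\|w_{\la}\|_{p}^{p}$, and this lower bound tends to $m_{*}(a)$ as $t\ri0^{+}$. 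Hence levels $c\in(m_{*}(a),\xi m_{*}(a))$ are perfectly compatible with $k\ge1$ bubbles of \emph{small} mass, and your inequality (which is the value of that bound at the full-bubble endpoint $t=a/k$, not over the whole admissible range) yields no contradiction. The missing ingredient is the paper's Step 1: since $c<\xi m_{*}(a)<0$, the multiplier satisfies the quantitative bound $-\la\ge -cr/a>0$ (your own identities $\la a=A-B$ and $c=\frac{1}{p}A-\frac{1}{r}B$ give this, but you never extract it), and then the scaling relation \eqref{ma1}, $\|w_{\la}\|_{p}^{p}=(-\la)^{s_{p,r}}\|w\|_{p}^{p}$, forces $\|w_{\la}\|_{p}^{p}\ge\beta a$ with an explicit $\beta\in(0,p^{-1})$, ruling out small bubbles. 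Your parenthetical ``after minimizing over the admissible range of $-\la$'' points in this direction, but the admissible range is never identified.

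The second gap is your exclusion of $k=1$ with $u_{0}\neq0$. Theorem \ref{thm1} only yields the non-quantitative strict inequality $\J(u_{0})>m_{*}(\|u_{0}\|_{p}^{p})$, which cannot produce a contradiction: if $\|u_{0}\|_{p}^{p}$ is small and $t=\|w_{\la}\|_{p}^{p}$ is close to $a$, you still only get $c>m_{*}(a-t)+m_{*}(t)$, a quantity arbitrarily close to $m_{*}(a)$ and hence consistent with $m_{*}(a)<c<\xi m_{*}(a)$. The paper instead proves a uniform lower bound $\|u_{0}\|_{p}^{p}\ge C(a)>0$ valid for every nontrivial nonnegative solution of the limit equation with $-\la$ bounded away from zero, by combining the identity \eqref{sps31} with the Sobolev and Gagliardo--Nirenberg inequalities; together with Step 1 this confines $t$ to $[\beta a,\,a-C(a)]$, and the endpoint analysis of $f_{1}$ on that interval forces $c\ge\xi(a)m_{*}(a)$, the desired contradiction. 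Note finally that $\xi(a)$ is not an a priori constant larger than $p^{-1/s_{p,r}}$, as you treat it: it is \emph{constructed} from $\beta$ and $C(a)$ (namely $\xi(a)=\max\{\beta_{1},\,f_{1}(a-C(a))/a^{(s_{p,r}+1)/s_{p,r}}\}$ when $a>C(a)$), so without the two quantitative bounds above the constant appearing in the statement cannot even be defined.
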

\begin{proof}
Let $\{u_{n}\}$ be a nonnegative Palais-Smale sequence of $\J$ restricted on $S(a)$ at the level $c$ with
$$
c\in(m_{*}(a),p^{-1/s_{p}}m_{*}(a)).
$$ 
This leads to that
\begin{align}\label{co1}
c=\frac{1}{p}\|\gr u_{n}\|_{p}^{p}-\frac{1}{r}\|u_{n}\|_{r}^{r}+o_{n}(1)
\end{align}
and there exists $\la_{n}\in\R$ such that for any $\varphi\in\X$,
\begin{align}\label{psca}
\int_{\Omega}|\gr u_{n}|^{p-2}\gr u_{n}\gr\varphi dx
-\int_{\Omega}|u_{n}|^{r-2}u_{n}\varphi dx
=\la_{n}\int_{\R^{N}}|u_{n}|^{p-2}u_{n}\varphi dx+o_{n}(1)\|\varphi\|.
\end{align}
By Gagliardo-Nirenberg inequality \eqref{gn} and \eqref{co1}, we can infer that $\{u_{n}\}$ is bounded in $\X$.
Thus there holds
\begin{align}\label{co2}
\|\gr u_{n}\|_{p}^{p}
-\|u_{n}\|_{r}^{r}
=\la_{n}a+o_{n}(1).
\end{align}
Using the boundedness of $\{u_{n}\}$ in $\X$, we deduce that $\{\la_{n}\}$ is bounded. We assume that $\la_{n}\ri\la$. Combining \eqref{co1} and \eqref{co2}, one has 
\begin{align*}
\la_{n}a+o_{n}(1)=\|\gr u_{n}\|_{p}^{p}+cr-\frac{r}{p}\|\gr u_{n}\|_{p}^{p}
=cr-\frac{r-p}{p}\|\gr u_{n}\|_{p}^{p}\leq cr.
\end{align*}
Let $n\ri\infty$ in the above inequality, we can infer that
\begin{align}\label{co3}
-\la\geq\frac{-cr}{a}>0.
\end{align}
One can derive from \eqref{co1} and \eqref{psca} that $\{u_{n}\}$ is a Palais-Smale sequence of $\Jl$,
where $\Jl(u)=\J(u)-\la\|u\|_{p}^{p}$. Then we can use Lemma \ref{spl} to conclude that there exist integer
$k\geq0$, a nonnegative function $u_{0}\in\X$, $k$ sequences $\{x_{n}^{i}\}\subset\R^{N}$ for $1\leq i\leq k$
such that $|x_{n}^{i}|\ri\infty$ as $n\ri\infty$ and 
\begin{align}\label{sp1}
u_{n}=u_{0}+\sum_{i=1}^{k}w_{\la}(x+y_{n}^{i})\quad\mbox{strongly in}\quad\Y,
\end{align}
$u_{0}$ is a solution of
\begin{align}
&\Lam u=\la|u|^{p-2}u+|u|^{r-2}u\quad\mbox{in}\quad\Omega,\label{sp2}\\
&a=\|u_{0}\|_{p}^{p}+k\|w_{\la}\|_{L^{p}(\R^{N})}^{p},\label{sp3}
\end{align}
and 
\begin{align}\label{sp4}
\Jl(u_{n})=\Jl(u_{0})+k\Il(w_{\la})+o_{n}(1).
\end{align}
By virtue of \eqref{ma2}, \eqref{sp3} and \eqref{sp4}, one has
\begin{align}\label{sp5}
c&=\J(u_{0})+km_{*}(\|w_{\la}\|^{p}_{L^{p}(\R^{N})})
\geq m_{*}(\|u_{0}\|_{p}^{p})+km_{*}(\|w_{\la}\|^{p}_{L^{p}(\R^{N})})\nonumber\\
&\geq\bigg(\frac{\|u_{0}\|^{p}_{L^{p}(\R^{N})}}{\|w\|_{L^{p}(\R^{N})}}\bigg)^{\frac{s_{p,r}+1}{s_{p,r}}}\I(w)
+k\bigg(\frac{\|w_{\la}\|^{p}_{L^{p}(\R^{N})}}
{\|w\|_{L^{p}(\R^{N})}}\bigg)^{\frac{s_{p,r}+1}{s_{p,r}}}\I(w)\\
&=\tau_{0}[(a-k\|w_{\la}\|^{p}_{L^{p}(\R^{N})})
^{\frac{s_{p,r}+1}{s_{p,r}}}+k(\|w_{\la}\|^{p}_{L^{p}(\R^{N})})^{\frac{s_{p,r}+1}{s_{p,r}}}],\nonumber
\end{align}
where $\tau_{0}=\frac{\I(w)}{(\|w\|_{L^{p}(\R^{N})}^{p})^{\frac{s_{p,r}+1}{s_{p,r}}}}$.

{\bf Step 1.} 
We claim that 
\begin{align}\label{ad}
\|w_{\la}\|^{p}_{L^{p}(\R^{N})}\geq\beta a \quad\mbox{if}\quad m_{*}(a)<c<2^{-1}m_{*}(a),
\end{align}
where
$$
\beta=p^{-1}\bigg(\frac{-r\I(w)}{\|w\|^{p}_{L^{p}(\R^{N})}}\bigg)^{s_{p,r}}\in(0,p^{-1}).
$$
\eqref{ma2} shows that
$$
 m_{*}(a)=(\frac{a}{\|w\|_{L^{p}(\R^{N})}})^{\frac{s_{p,r}+1}{s_{p,r}}}\I(w).
$$
By this fact, \eqref{ma1} and \eqref{co3}, we infer that
\begin{align}\label{cos11}
&\quad\|w_{\la}\|_{L^{p}(\R^{N})}^{p}=(-\la)^{s_{p,r}}\|w\|_{L^{p}(\R^{N})}^{p}
\geq (\frac{-cr}{a})^{s_{p,r}}\|w\|_{L^{p}(\R^{N})}^{p}\nonumber\\
&\geq \bigg(\frac{-rp^{-1/s_{p,r}}m_{*}(a)}{a}\bigg)^{s_{p,r}}\|w\|_{L^{p}(\R^{N})}^{p}
=p^{-1}\bigg(\frac{-r\I(w)}{\|w\|^{p}_{L^{p}(\R^{N})}}\bigg)^{s_{p,r}}a.
\end{align}
Since $w$ is a solution of \eqref{pi}, one concludes that
\begin{align*}
\|\gr w\|_{L^{p}(\R^{N})}^{p}+\|w\|_{L^{p}(\R^{N})}^{p}
=\|w\|_{L^{r}(\R^{N})}^{r}.
\end{align*}
Thereby we can deduce that
\begin{align*}
\I(w)=\frac{1}{p}\|\gr w\|_{\Lp}^{p}-\frac{1}{r}\|w\|_{L^{r}(\R^{N})}^{r}
=(\frac{1}{p}-\frac{1}{r})\|\gr w\|_{\Lp}^{p}-\frac{1}{r}\|w\|_{\Lp}^{p},
\end{align*}
which leads to that
\begin{align*}
\frac{-r\I(w)}{\|w\|_{\Lp}^{p}}=(1-\frac{r}{p})\frac{\|\gr w\|_{\Lp}^{p}}{\|w\|_{\Lp}^{p}}+1.
\end{align*}
Consequently, there holds 
\begin{align}\label{cos12}
0<\frac{-r\I(w)}{\|w\|_{\Lp}^{p}}<1.
\end{align}
In view of \eqref{cos11} and \eqref{cos12}, this claim is right.

{\bf Step 2.} We define 
$$
\beta_{1}=\beta^{\frac{s_{p,r}+1}{s_{p,r}}}+(1-\beta)^{\frac{s_{p,r}+1}{s_{p,r}}}
\in(2^{-\frac{1}{s_{p,r}}},1).
$$
We want to prove that if $m_{*}(a)<c<\beta_{1}m_{*}(a)$, then $k\leq1$.
For $k\geq1$, define the function 
$$
f_{k}(t):=(a-kt)^{\frac{s_{p,r}+1}{s_{p,r}}}+kt^{\frac{s_{p,r}+1}{s_{p,r}}}\quad\mbox{for any}\quad
t\in[0,\frac{a}{k}].
$$
We can compute that $f_{k}\in C^{1}([0,\frac{a}{k}])$ and
$$
f_{k}'(t)=\frac{s_{p,r}+1}{s_{p,r}}k[t^{\frac{1}{s_{p,r}}}-(a-kt)^{\frac{1}{s_{p,r}}}]\quad\mbox{for any}\quad
t\in[0,\frac{a}{k}].
$$ 
Take $t_{k}=\frac{a}{k+1}$. One deduces that $f_{k}$ is decreasing in $[0,t_{k}]$ and increasing in 
$[t_{k},\frac{a^{2}}{k}]$. Thus 
$$
\min_{t\in[0,\frac{a}{k}]}f_{k}(t)=f_{k}(t_{k})>0.
$$
From \eqref{sp3} and \eqref{ad}, we know $a\geq k\beta a$, which suggests that $k\leq \beta^{-1}$.
Define 
$$
g(k):=(1-k\beta)^{\frac{s_{p,r}+1}{s_{p,r}}}+k\beta^{\frac{s_{p,r}+1}{s_{p,r}}}\quad\mbox{for any}\quad k\in[1,\beta^{-1}].
$$
It is clear that $g\in C^{1}([1,\beta^{-1}])$ and 
$$
g'(k)=\beta^{\frac{s_{p,r}+1}{s_{p,r}}}-
\beta\frac{s_{p,r}+1}{s_{p,r}}(1-k\beta)^{\frac{1}{s_{p,r}}}\quad\mbox{for any}\quad k\in[1,\beta^{-1}].
$$
Let $k_{0}:=\frac{1}{\beta}-(\frac{s_{p,r}}{s_{p,r}+1})^{s_{p}}\in(1,\beta^{-1})$. We conclude that
$g$ is increasing in $[1,k_{0}]$ and  is decreasing in $[k_{0},\beta^{-1}]$.

If $k\geq2$, since $\|w_{\la}\|^{p}_{L^{p}(\R^{N})}\geq\beta a$ and $\beta\leq\frac{1}{2}$, by \eqref{ma2} and \eqref{sp5}, we conclude that
\begin{align*}
c&\geq\tau_{0}\max_{t\in[\beta a,\frac{a}{k}]}f_{k}(t)
=\tau_{0}\max\{f_{k}(\beta a),f_{k}(\frac{a}{k})\}
=\max\{g(k),k^{-1/s_{p,r}}\}m_{*}(a)\\
&\geq\max\{g(1),g(\beta^{-1}),2^{-1/s_{p,r}}\}m_{*}(a)
=\max\{\beta_{1},\beta^{\frac{1}{s_{p,r}}},2^{-1/s_{p,r}}\}m_{*}(a)=\beta_{1}m_{*}(a).
\end{align*}
It is impossible due to  $c<2^{-1}m_{*}(a)$. Hence we have proved that $k\leq1$.

{\bf Step 3.} We shall prove that there exists $\xi=\xi(a)\in[\beta_{1},1)$ such that if $m_{*}(a)<c<\xi m_{*}(a)$, then $k=0$.

From Step $2$ we know that $k\leq1$. Suppose by contradiction that $u_{0}=0$. Then it follows from 
\eqref{sp3} and \eqref{sp5} that $c=0$ or $c=m_{*}(a)$, which is a contradiction. Thus $u_{0}\not\equiv0$.
\eqref{sp2} leads to 
\begin{align}\label{sps31}
\|\gr u_{0}\|_{p}^{p}+(-\la)\|u_{0}\|_{p}^{p}=\|u_{0}\|_{r}^{r}.
\end{align}
Applying \eqref{sps31} and Sobolev inequality, we know that there exists $C>0$ independent of $u_{0}$
and $\la$ such that
\begin{align*}
\min\{1,-\la\}(\|\gr u_{0}\|_{p}^{p}+\|u_{0}\|_{p}^{p})
\leq\|u_{0}\|_{r}^{r}\leq C(\|\gr u_{0}\|_{p}^{p}+\|u_{0}\|_{p}^{p})^{\frac{r}{p}}.
\end{align*} 
In view of $u_{0}\not\equiv0,\,\la<0$ and $r>p$, we can infer that
\begin{align}\label{sps32}
\|\gr u_{0}\|_{p}^{p}+\|u_{0}\|_{p}^{p}
\geq \bigg(\frac{\min\{1,-\la\}}{C}\bigg)^{\frac{p}{r-p}}>0.
\end{align}
Using \eqref{sps31}, \eqref{sps32} and \eqref{gn}, we derive that
$$
\|\gr u_{0}\|_{p}^{p}\leq\|u_{0}\|_{r}^{r}\leq
C_{N,p,r}^{p}\|u_{0}\|_{p}^{(1-\gamma_{p,r})r}\|\gr u_{0}\|_{p}^{r\gamma_{p,r}}.
$$
It holds from $p<r<p_{c}$ that $0<r\gamma_{p,r}<p$. Then 
\begin{align}\label{sps33}
\|\gr u_{0}\|_{p}\leq C_{N,p,r}^{\frac{r}{p-\gamma_{p,r}}}\|u_{0}\|_{p}^{\frac{(1-\gamma_{p,r})r}{p-r\gamma_{p,r}}}.
\end{align}

In the light of \eqref{sps32}, \eqref{sps33}, \eqref{co3} and $c<2^{-1/s_{p,r}}m_{*}(a)$, one can deduce that
\begin{align}\label{sps34}
C_{N,p,r}^{\frac{pr}{p-r\gamma_{p,r}}}(\|u_{0}\|_{p}^{p})^{\frac{(1-\gamma_{p,r})p}{p-r\gamma_{p,r}}}
+\|u_{0}\|_{p}^{p}
\geq\bigg(\frac{1}{C}\min\{1,-r2^{-1/s_{p,r}}\}\frac{m_{*}(a)}{a}\bigg)^{\frac{p}{r-p}}>0.
\end{align}
\end{proof}
We can see from \eqref{sps34} that there exists $C(a)>0$ such that
$$
\|u_{0}\|_{p}^{p}\geq C(a).
$$
Define 
\begin{align*}
\xi(a):=
\begin{cases}
\beta_{1}&\quad\mbox{if}\quad a\leq C(a),\\
\max\{\beta_{1},\frac{f_{1}(a-C(a))}{a^{\frac{s_{p,r}+1}{s_{p,r}}}}\}
&\quad\mbox{if}\quad a\geq C(a).
\end{cases}
\end{align*}

If $a\leq C(a)$, we can conclude that $\|u_{0}\|_{p}^{p}\geq C(a)\geq a$. According to 
\eqref{sp3}, we get $\|u_{0}\|_{p}^{p}=a$ and $k=0$.

If $a>C(a)$, we have that $0<a-C(a)<a$ and
$$
0<f_{1}(a-C(a))<\max\{f_{1}(0),f_{1}(a)\}=a^{\frac{s_{p,r}+1}{s_{p,r}}},
$$
which implies that 
$$
0<\frac{f_{1}(a-C(a))}{a^{\frac{s_{p,r}+1}{s_{p,r}}}}<1.
$$
Thus we derive $\xi(a)\in[\beta_{1},1)$. Assume by contradiction that $k=1$. By virtue of \eqref{sp3}, \eqref{ad}  and $\|u_{0}\|_{p}^{p}\geq C(a)$, we conclude that
$$
\beta a\leq\|w_{\la}\|_{L^{p}(\R^{N})}^{p}=a-\|u_{0}\|_{p}^{p}\leq a-C(a).
$$
From this fact, \eqref{ma2}, \eqref{sp5} and $\xi(a)\in[\beta_{1},1)$, one deduces that
\begin{align*}
c\geq\tau_{0}\max\{f_{1}(\beta a),f_{1}(a-C(a))\}\geq \xi(a)m_{*}(a).
\end{align*}
It is a contradiction due to $c<\xi(a)m_{*}(a)$. Then $k=0$ and $u_{n}\ri u_{0}$ in $\X$ by using Lemma \ref{spl}.

\begin{prop}\label{comm}
Assume that $\{u_{n}\}\subset S(a)$ is a minimizing sequence of $\J$ on $S(a)$. Then there exists a sequence
$\{y_{n}\}\subset\R^{N}$ such that $u_{n}(x)=w_{\la}(x-y_{n})+o_{n}(1)$ in $\Y$
and $|y_{n}|\ri\infty$ as $n\ri\infty$.
\end{prop}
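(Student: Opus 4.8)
The plan is to turn the minimizing sequence into a Palais--Smale sequence, feed it into the splitting lemma (Lemma \ref{spl}), and then eliminate every possible bubble configuration except a single escaping copy of $w_{\la}$ by an energy/mass accounting carried out exactly at the minimal level $m(a)=m_{*}(a)$. Throughout I identify each $u\in\X$ with its extension by zero to $\R^{N}$, so that $u\in\Y$ with $\supp u\subset\overline{\Omega}$ and $\J(u)=\I(u)$. First I would reduce to $u_{n}\ge0$: since $|u_{n}|\in S(a)$ and $\J(|u_{n}|)=\J(u_{n})$, this is harmless.

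Because a minimizing sequence need not be Palais--Smale, I would invoke Ekeland's variational principle on the complete manifold $S(a)$ to produce $\{v_{n}\}\subset S(a)$ with $\J(v_{n})\to m(a)$, $\|v_{n}-u_{n}\|\to0$ and $\J|_{S(a)}'(v_{n})\to0$; since $\|v_{n}-u_{n}\|\to0$, a decomposition for $v_{n}$ transfers verbatim to $u_{n}$, and $v_{n}^{-}\to0$ so $\{v_{n}\}$ is asymptotically nonnegative. Boundedness of $\{v_{n}\}$ in $\X$ follows from \eqref{gn} as in the proof of Lemma \ref{co}, the Lagrange multipliers $\la_{n}$ are bounded, and along a subsequence $\la_{n}\to\la$; the argument producing \eqref{co3}, now with $c=m(a)=m_{*}(a)<0$ (Lemma \ref{min}), forces $\la<0$. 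Hence $\{v_{n}\}$ is a $(PS)$ sequence of $\Jl$ and Lemma \ref{spl} applies, yielding positive bubbles.

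Lemma \ref{spl} provides an integer $k\ge0$, a limit $u_{0}\in\X$ and points $y_{n}^{i}$ with $|y_{n}^{i}|\to\infty$ so that $v_{n}=u_{0}+\sum_{i=1}^{k}w_{\la}(\cdot+y_{n}^{i})+o_{n}(1)$ in $\Y$, with $a=b_{0}+kb_{1}$ and, as in \eqref{sp5}, $m_{*}(a)=\J(u_{0})+k\,m_{*}(b_{1})\ge m_{*}(b_{0})+k\,m_{*}(b_{1})$, where $b_{0}=\|u_{0}\|_{p}^{p}$, $b_{1}=\|w_{\la}\|_{L^{p}(\R^{N})}^{p}$ and the lower bound uses $\J(u_{0})=\I(u_{0})\ge m_{*}(b_{0})$. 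The key structural input is that, by \eqref{ma2}, $m_{*}(t)$ is a negative constant times $t^{\theta}$ with $\theta=\frac{s_{p,r}+1}{s_{p,r}}$; since $p<r<p_{c}$ gives $s_{p,r}>0$, we have $\theta>1$, so $m_{*}$ is strictly subadditive, $m_{*}(\alpha+\beta)<m_{*}(\alpha)+m_{*}(\beta)$ for $\alpha,\beta>0$, and $m_{*}(kb_{1})=k^{\theta}m_{*}(b_{1})\le k\,m_{*}(b_{1})$.

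The elimination then runs as follows. If $k=0$, then $v_{n}\to u_{0}$ in $\X$ with $u_{0}\in S(a)$ and $\J(u_{0})=m(a)$, so $m(a)$ would be attained, contradicting Theorem \ref{thm1}; hence $k\ge1$. If moreover $b_{0}>0$, strict subadditivity gives $m_{*}(a)=m_{*}(b_{0}+kb_{1})<m_{*}(b_{0})+m_{*}(kb_{1})\le m_{*}(b_{0})+k\,m_{*}(b_{1})\le m_{*}(a)$, a contradiction, so $u_{0}=0$. With $u_{0}=0$ the energy identity becomes $m_{*}(a)=k\,m_{*}(b_{1})$ while $a=kb_{1}$ forces $m_{*}(a)=m_{*}(kb_{1})=k^{\theta}m_{*}(b_{1})$; hence $k^{\theta}=k$ and $k=1$. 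This leaves $v_{n}=w_{\la}(\cdot+y_{n}^{1})+o_{n}(1)$ in $\Y$ with $|y_{n}^{1}|\to\infty$, and setting $y_{n}=-y_{n}^{1}$ and passing back to $u_{n}$ completes the proof. I expect the accounting step to be the main obstacle: precisely at the threshold level $c=m_{*}(a)$, which is \emph{not} covered by Lemma \ref{co}, one must simultaneously rule out the trivial configuration (via the non-attainment Theorem \ref{thm1}) and all multi-bubble or nonzero-residual configurations (via the strict subadditivity coming from the homogeneity of $m_{*}$). An alternative route avoiding Ekeland is to observe that the zero-extended $\tilde u_{n}$ is a minimizing sequence for the whole-space problem $m_{*}(a)$, apply the classical concentration-compactness on $\R^{N}$ to get $\tilde u_{n}=w_{\la}(\cdot-y_{n})+o_{n}(1)$, and then deduce $|y_{n}|\to\infty$ from Theorem \ref{thm1} (otherwise the limit $w_{\la}(\cdot-y_{0})>0$ would have to vanish on the positive-measure set $\R^{N}\setminus\Omega$).
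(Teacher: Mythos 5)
Your proof is correct, but it takes a genuinely different route from the paper's. The paper's argument is much shorter: since $\J(u_{n})=\I(u_{n})$ and $m(a)=m_{*}(a)$ (Lemma \ref{min}), it regards $\{u_{n}\}$ as a minimizing sequence for the \emph{whole-space} problem, applies Ekeland to get a Palais--Smale sequence $\{v_{n}\}$ of $\I$ with $u_{n}=v_{n}+o_{n}(1)$, and then invokes the known compactness-up-to-translation for the limit problem (Proposition \ref{ma}, i.e.\ the results of \cite{ZZ2022Non}) to get $v_{n}(\cdot+y_{n})\ri w_{\la}$ directly; the divergence $|y_{n}|\ri\infty$ is then obtained by a separate contradiction argument (if $y_{n}\ri y_{0}$, then $u_{n}\ri w_{\la}(\cdot-y_{0})$ in $\Y$, but $u_{n}\equiv0$ on $\R^{N}\setminus\Omega$ while $w_{\la}>0$ everywhere). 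This is exactly the ``alternative route'' you sketch in your last sentences. Your main argument instead stays inside the exterior-domain framework: Ekeland on $S(a)$, multiplier convergence $\la_{n}\ri\la<0$, the splitting Lemma \ref{spl}, and then an exhaustive elimination of bubble configurations at the threshold level $c=m_{*}(a)$ using (i) the non-attainment Theorem \ref{thm1} to exclude $k=0$, (ii) strict superadditivity of $t\mapsto t^{\theta}$ with $\theta=\frac{s_{p,r}+1}{s_{p,r}}>1$ (hence strict subadditivity of the negative quantity $m_{*}$) to exclude $u_{0}\neq0$, and (iii) the identity $k^{\theta}=k$ to force $k=1$; your accounting is arithmetically sound, and you even get $|y_{n}|\ri\infty$ for free from Lemma \ref{spl} rather than needing the paper's support/positivity contradiction. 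The trade-off: the paper's proof outsources all compactness to the $\R^{N}$ theory and needs neither Theorem \ref{thm1} nor the splitting lemma, while yours is self-contained, correctly flags that Lemma \ref{co} does not cover the level $c=m_{*}(a)$, but leans on Lemma \ref{spl} producing translates of the \emph{positive} ground state $w_{\la}$ as bubbles --- which, for your possibly sign-changing Ekeland sequence, requires the asymptotic-nonnegativity remark you make (the paper glosses over the same point by assuming nonnegativity in Lemma \ref{co}). Both proofs are valid; yours is longer but more informative about why single-bubble escape is the only possible behavior.
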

\begin{proof}
Observe that $\I(u_{n})=\J(u_{n})$ and $m(a)=m_{*}(a)$. Then $\{u_{n}\}$ is a minimizing sequence of $\I$ on $S_{*}(a)$. It follows from the Ekeland variational principle that there exists a Palas-Smale sequence $\{v_{n}\}$
for $\I|_{S(a)}$ such that
$$
\I(v_{n})\ri m_{*}(a)\quad\mbox{and}\quad u_{n}=v_{n}+o_{n}(1).
$$ 
By Proposition \ref{ma}, there exists $\{y_{n}\}\subset\R^{N}$ such that
$$
v_{n}(\cdot+y_{n})\ri w_{\la}\quad\mbox{in}\quad\Y.
$$
Since $u_{n}=v_{n}+o_{n}(1)$, then 
\begin{align}\label{comm1}
u_{n}\ri w_{\la}(\cdot-y_{n})\quad\mbox{in}\quad\Y.
\end{align}
Suppose by contradiction that $\{y_{n}\}$ is bounded. Then up to a subsequence we have that
$y_{n}\ri y_{0}$ for some $y_{0}\in\R^{N}$. Note that
\begin{align*}
w_{\la}(\cdot-y_{n})\wri w_{\la}(\cdot-y_{0})\quad\mbox{in}\quad\Y
\end{align*}
and
\begin{align*}
\|w_{\la}(\cdot-y_{n})\|\ri\|w_{\la}(\cdot-y_{0})\|.
\end{align*}
Thus 
\begin{align}\label{comm2}
w_{\la}(\cdot-y_{n})\ri w_{\la}(\cdot-y_{0})\quad\mbox{in}\quad\Y.
\end{align}
One can derive from  $u_{n}=v_{n}+o_{n}(1)$, \eqref{comm1} and \eqref{comm2} that 
$$
u_{n}\ri w_{\la}(\cdot-y_{0}) \quad\mbox{in}\quad\Y.
$$
By the fact that $u_{n}\equiv0$ in $\R^{N}\setminus\Omega$, we conclude that $w_{\la}=0$ in $\R^{N}\setminus\Omega$. It is impossible since $w_{\la}>0$ in $\R^{N}$. Thus $|y_{n}|\ri\infty$.
\end{proof}

\section{Existence of solution}\label{sec4}

In this section we are devoted to show the existence of normalized solution for \eqref{p}.

\begin{lem}\label{up}
For any $a>0$, there exists $R_{a}>0$ such that for any $0<R\leq R_{a}$, it holds
$$
\sup_{y\in\R^{N}}\I(\Psi_{a,R}(y))<\xi m_{*}(a),
$$
 where $\xi=\xi(a)\in(0,1)$ is give by Lemma \ref{co}.
\end{lem}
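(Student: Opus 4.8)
The plan is to transfer the uniform-in-$y$ convergence of Lemma \ref{be}$(iii)$ to the level of the functional $\I$, using the sign information $m_{*}(a)<0$ supplied by Lemma \ref{min}. The geometry is favourable: since $\xi=\xi(a)\in(0,1)$ and $m_{*}(a)<0$, the target value satisfies $\xi m_{*}(a)-m_{*}(a)=(\xi-1)m_{*}(a)>0$, so $\xi m_{*}(a)$ sits strictly \emph{above} $m_{*}(a)$. Hence it is enough to show that $\I(\Psi_{a,R}(y))$ can be made, uniformly in $y$, arbitrarily close to $m_{*}(a)$ by shrinking $R$.

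First I would record the functional-level convergence. Since $\I$ is invariant under translations and $w_{\la}$ attains $m_{*}(a)$ on $S_{*}(a)$ (Proposition \ref{ma}), we have $\I(w_{\la}(\cdot-y))=\I(w_{\la})=m_{*}(a)$ for every $y\in\R^{N}$. By Lemma \ref{be}$(iii)$ (more precisely, the estimates in its proof), $\|\Psi_{a,R}(y)-w_{\la}(\cdot-y)\|_{\Y}\ri0$ as $R\ri0^{+}$, uniformly in $y$; in particular these functions stay in a fixed bounded subset of $\Y$ for small $R$, since their norms converge to the constant $\|w_{\la}\|_{\Y}$. As $\I$ is uniformly continuous on bounded subsets of $\Y$ — the gradient term is locally Lipschitz, and the term $u\mapsto\frac{1}{r}\int_{\R^{N}}|u|^{r}$ is controlled through the continuous embedding $\Y\hookrightarrow L^{r}(\R^{N})$, valid because $p<r<p^{*}$ — it follows that
$$
\sup_{y\in\R^{N}}\big|\I(\Psi_{a,R}(y))-m_{*}(a)\big|=\sup_{y\in\R^{N}}\big|\I(\Psi_{a,R}(y))-\I(w_{\la}(\cdot-y))\big|\ri0\quad\mbox{as}\quad R\ri0^{+}.
$$

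To finish, I would set $\delta:=\tfrac{1}{2}(\xi-1)m_{*}(a)>0$ and choose $R_{a}>0$ so small that the above supremum is $<\delta$ for all $0<R\leq R_{a}$. Then for every such $R$ and every $y\in\R^{N}$,
$$
\I(\Psi_{a,R}(y))<m_{*}(a)+\delta=\tfrac{1+\xi}{2}\,m_{*}(a)<\xi m_{*}(a),
$$
the last inequality holding because $\tfrac{1+\xi}{2}>\xi$ while $m_{*}(a)<0$. Taking the supremum over $y$ yields the claim. The one point requiring care is the uniformity in $y$: it hinges on the convergence in Lemma \ref{be}$(iii)$ being uniform in $y$ — which in turn uses that $w_{\la}$ is radially decreasing, so the cut-off tails are dominated by integrals over balls centred at the origin — together with the uniform continuity of $\I$ on bounded sets. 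Everything else is sign bookkeeping forced by $m_{*}(a)<0$.
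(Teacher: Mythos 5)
Your proof is correct and follows essentially the same route as the paper: the uniform-in-$y$ convergence of Lemma \ref{be}$(iii)$, the translation invariance $\I(w_{\la}(\cdot-y))=\I(w_{\la})=m_{*}(a)$, and the sign information $m_{*}(a)<0$ (so that $\xi m_{*}(a)$ lies strictly above $m_{*}(a)$) are exactly the ingredients used there. The only cosmetic difference is that the paper translates $\Psi_{a,R}(y)$ back by $y$, so that only continuity of $\I$ at the single point $w_{\la}$ is needed, whereas you invoke the (equally valid, and easily verified) uniform continuity of $\I$ on bounded subsets of $\Y$.
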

\begin{proof}
In view of Proposition \ref{ma}, we have $\I(w_{\la})=m_{*}(a)<0$.
Let $\xi_{0}\in(\xi,1)$. Noting that $\I$ is continuous, then there exists $\delta>0$ such that for any
$u\in \Y$ and $\|u-w_{\la}\|_{\Y}<\delta$, one has
$$
\I(u)<\xi_{0}m_{*}(a).
$$
Exploiting Lemma \ref{be}, we can get that there exists $R_{a}>0$ such that for any $0<R<R_{a}$,
$$
\|\Psi_{a,R}(y)-w_{\la}(\cdot-y)\|_{\Y}<\delta\quad\mbox{for any}\quad y\in\R^{N}.
$$
It leads to that
$$
\|\Psi_{a,R}(y)(\cdot+y)-w_{\la}\|_{\Y}<\delta\quad\mbox{for any}\quad y\in\R^{N}.
$$
Consequently, there holds that
$$
\I(\Psi_{a,R}(y))=\I(\Psi_{a,R}(y)(\cdot+y))<\xi_{0} m_{*}(a)\quad\mbox{for any}\quad y\in\R^{N}.
$$
The proof is finished.
\end{proof}

In what follows we construct a min-max  theorem to derive a Palais-Smale sequence.

To prove Theorem \ref{thm2}, we suppose that $\R^{N}\backslash\Omega\subset B_{R_{a}}(0)$, where $R_{a}$ is defined as Lemma \ref{up}. Then $\Psi_{a,R_{a}}\in S(a)$.
Let $\tau>0$ be such that $\R^{N}\setminus\Omega\subset B_{r}(0)$ and $\chi\in C(\R^{+},\R)$ be a nonincreasing function defined as
\begin{align*}
\chi(t)
\begin{cases}
1,&\quad 0<t\leq \tau,\\
\frac{\tau}{t},&\quad t>\tau.
\end{cases}
\end{align*}
We introduce the barycenter function $\mu:\Y\ri\R^{N}$ defined as
$$
\mu(u):=\int_{\R^{N}}|u(x)|^{p}\chi(|x|)xdx.
$$
It is clear that
$$
\chi(|x|)|x|\leq r\quad\mbox{for any}\quad x\in\R^{N}.
$$

Define the set
$$
\mathcal{B}:=\{u\in S(a):\mu(u)=0\}.
$$
Since $\Psi_{a,R_{a}}(0)\in S(a)$ and $w_{\la}$ is a radial function, then $\Psi_{a,R_{a}}(0)\in\mathcal{B}$
and $\mathcal{B}$ in not empty. Let
$$
b(a):=\inf_{u\in\mathcal{B}}\J(u).
$$

The following estimate result holds.
\begin{lem}\label{esb}
There holds that
$$
b(a)>m_{*}(a).
$$
\end{lem}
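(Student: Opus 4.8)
The plan is to argue by contradiction, combining the trivial lower bound $b(a)\geq m_{*}(a)$ with the rigid description of minimizing sequences given by Proposition~\ref{comm}. Since $\mathcal{B}\subset S(a)$ and $m(a)=m_{*}(a)$ by Lemma~\ref{min}, we immediately get $b(a)\geq m_{*}(a)$, so it suffices to rule out equality. Assume therefore that $b(a)=m_{*}(a)$ and choose $\{u_{n}\}\subset\mathcal{B}$ with $\J(u_{n})\ri m_{*}(a)$. Because $\J(u_{n})=\I(u_{n})$ for $u_{n}\in S(a)$ and $m(a)=m_{*}(a)$, this $\{u_{n}\}$ is a minimizing sequence of $\J$ on $S(a)$, so Proposition~\ref{comm} supplies $\{y_{n}\}\subset\R^{N}$ with $|y_{n}|\ri\infty$ and
$$
u_{n}=w_{\la}(\cdot-y_{n})+o_{n}(1)\quad\mbox{in}\quad\Y.
$$
Passing to a subsequence, assume $y_{n}/|y_{n}|\ri e$ for some $e\in\R^{N}$ with $|e|=1$.

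The heart of the argument is to compute the limit of the barycenter along the translated ground states. Setting $z=x-y_{n}$ gives
$$
\mu(w_{\la}(\cdot-y_{n}))=\int_{\R^{N}}|w_{\la}(z)|^{p}\,\chi(|z+y_{n}|)(z+y_{n})\,dz.
$$
For each fixed $z$, once $|y_{n}|$ is large we have $|z+y_{n}|>\tau$, so $\chi(|z+y_{n}|)(z+y_{n})=\tau\frac{z+y_{n}}{|z+y_{n}|}\ri\tau e$, and in all cases $|\chi(|z+y_{n}|)(z+y_{n})|=\chi(|z+y_{n}|)|z+y_{n}|\leq\tau$; thus the integrand is dominated by $\tau|w_{\la}(z)|^{p}\in L^{1}(\R^{N})$. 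By dominated convergence and $\|w_{\la}\|_{\Lp}^{p}=a$,
$$
\mu(w_{\la}(\cdot-y_{n}))\ri\tau e\int_{\R^{N}}|w_{\la}(z)|^{p}\,dz=\tau a\,e\neq0,
$$
since $\tau>0$ and $a>0$.

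It remains to transfer this limit from $w_{\la}(\cdot-y_{n})$ to $u_{n}$. Using $|\chi(|x|)x|\leq\tau$, the elementary inequality $\big||s|^{p}-|t|^{p}\big|\leq p(|s|^{p-1}+|t|^{p-1})|s-t|$ and H\"older's inequality, together with the boundedness of $\{u_{n}\}$ and $\{w_{\la}(\cdot-y_{n})\}$ in $\Lp$, we find
$$
|\mu(u_{n})-\mu(w_{\la}(\cdot-y_{n}))|\leq\tau\int_{\R^{N}}\big||u_{n}|^{p}-|w_{\la}(\cdot-y_{n})|^{p}\big|\,dx\leq C\|u_{n}-w_{\la}(\cdot-y_{n})\|_{\Lp}=o_{n}(1).
$$
Hence $\mu(u_{n})\ri\tau a\,e\neq0$, which contradicts $u_{n}\in\mathcal{B}$, i.e. $\mu(u_{n})=0$. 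This contradiction yields $b(a)>m_{*}(a)$.

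I expect the main obstacle to be the barycenter limit: even though the profile $w_{\la}(\cdot-y_{n})$ escapes to infinity and $\chi(|x|)\ri0$ pointwise there, the cut-off is designed exactly so that $\chi(|x|)x$ keeps magnitude $\tau$ and records the escape direction $e$, producing the nonzero limit $\tau a\,e$; establishing the required domination and the continuity of $\mu$ along the $\Y$-convergence is then routine.
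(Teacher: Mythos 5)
Your proof is correct, and its key computation takes a genuinely different (and in one respect more careful) route than the paper's. Both arguments share the same skeleton: assume $b(a)=m_{*}(a)$, take a minimizing sequence $\{u_{n}\}\subset\mathcal{B}$, invoke Proposition~\ref{comm} to write $u_{n}=w_{\la}(\cdot-y_{n})+o_{n}(1)$ in $\Y$ with $|y_{n}|\ri\infty$, and contradict $\mu(u_{n})=0$. The paper then splits $\R^{N}$ into the half-spaces $(\R^{N})_{n}^{+}$ and $(\R^{N})_{n}^{-}$ determined by $y_{n}$, bounds from below the contribution of the bump of $w_{\la}$ near $y_{n}$, and kills the opposite half-space by dominated convergence, obtaining only the one-sided estimate $\langle\mu(w_{\la}(\cdot-y_{n})),y_{n}/|y_{n}|\rangle\geq c_{0}+o_{n}(1)$ with $c_{0}>0$. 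You instead pass to a subsequence with $y_{n}/|y_{n}|\ri e$, change variables, and identify the exact limit $\mu(w_{\la}(\cdot-y_{n}))\ri\tau a e\neq0$ by dominated convergence, exploiting that $\chi(|x|)x$ has modulus at most $\tau$ and equals $\tau x/|x|$ far from the origin; this is cleaner and yields a stronger conclusion than the paper's lower bound. Moreover, your transfer step, the explicit estimate $|\mu(u_{n})-\mu(w_{\la}(\cdot-y_{n}))|\leq\tau\int_{\R^{N}}\big||u_{n}|^{p}-|w_{\la}(\cdot-y_{n})|^{p}\big|dx\leq C\|u_{n}-w_{\la}(\cdot-y_{n})\|_{\Lp}$, repairs a point the paper glosses over: the paper concludes $\mu(w_{\la}(\cdot-y_{n}))\ri0$ from ``$\mu$ is continuous'' and $u_{n}-w_{\la}(\cdot-y_{n})\ri0$, but pointwise continuity alone does not transfer a limit along two sequences whose difference tends to zero; what is really needed is the uniform (H\"older-based) continuity of $\mu$ on $L^{p}$-bounded sets, which is exactly what your estimate provides. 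The only cost of your route is the extraction of a subsequence for the direction $e$, which is harmless in a proof by contradiction.
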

\begin{proof}
From the definition of $m_{*}(a)$, $b(a)$ and Lemma \ref{min}, we have 
$$
m(a)=m_{*}(a)\leq b(a).
$$
Suppose by contradiction that $b(a)=m_{*}(a)$. Then there exists $\{u_{n}\} \subset S(a)$ such that
\begin{align*}
\mu(u_{n})=0\quad\mbox{for any}\quad n\geq1\quad\mbox{and}
\quad \J(u_{n})\ri m_{*}(a).
\end{align*}
Without loss of generality, we may assume that $u_{n}\geq0$ almost for each $x\in\Omega$ due to
$\mu(|u_{n}|)=\mu(u_{n})$ and $\J(|u_{n}|)=\J(u_{n})$. Applying Proposition \ref{comm}, there exists $y_{n}\in\R^{N}$ such that
$$
u_{n}=w_{\la}(\cdot-y_{n})+o_{n}(1)\quad\mbox{strongly in}\quad\Y\quad
\mbox{and}\quad|y_{n}|\ri\infty.
$$ 
In order to obtain a contradiction, we define the sets
$$
(\R^{N})_{n}^{+}:=\{x\in\R^{N}:\langle x,y_{n}\rangle>0\}
\quad\mbox{and}\quad
(\R^{N})_{n}^{-}:=\R^{N}\backslash(\R^{N})_{n}^{+}.
$$ 
By the continuity of $w_{\la}$ and $w_{\la}(0)=\max_{x\in\R^{N}}w_{\la}(x)$, we can show that there exists $\tau_{0}>0$ such that
\begin{align}\label{esb1}
w_{\la}(x-y_{n})\geq\frac{1}{2}w_{\la}(0)>0\quad\mbox{for any}\quad x\in B_{\tau_{0}}(y_{n}).
\end{align}
Since $|y_{n}|\ri\infty$, there exists $n_{0}>0$ such that for any $x\in B_{\tau_{0}}(y_{n})$,
\begin{align}\label{esb3}
\langle x,y_{n}\rangle=\frac{|x|^{2}+|y_{n}|^{2}-\langle x-y_{n},x-y_{n}\rangle}{2}
>\frac{|y_{n}|^{2}-r_{0}}{2}\geq\frac{|y_{n}|^{2}}{4}>0\quad\mbox{for any}\quad n\geq n_{0}.
\end{align}
According to \eqref{esb1}, \eqref{esb3} and the meaning of $\chi$, there holds for $n\geq n_{0}$,
\begin{align}\label{esb4}
&\quad\int_{(\R^{N})^{+}_{n}}|w_{\la}(x-y_{n})|^{p}\chi(|x|)\langle x,y_{n}\rangle dx\nonumber\\
&=\int_{B_{\tau_{0}}(y_{n})}|w_{\la}(x-y_{n})|^{p}\chi(|x|)\langle x,y_{n}\rangle dx
+\int_{(\R^{N})_{n}^{+}\setminus B_{\tau_{0}}(y_{n})}|w_{\la}(x-y_{n})|^{p}\chi(|x|)\langle x,y_{n}\rangle dx\nonumber\\
&\geq\frac{w_{\la}^{p}(0)}{2^{p}}\frac{\tau_{0}}{|y_{n}|}\frac{|y_{n}|^{2}}{4}|B_{\tau_{0}}(0)|
\geq \frac{\tau_{0}w_{\la}^{p}(0)|y_{n}|}{2^{p+2}}|B_{\tau_{0}}(0)|.
\end{align}

On the other hand, we conclude that for any $x\in(\R^{N})_{+}^{-}$,
\begin{align*}
|x-y_{n}|^{2}=|x|^{2}+|y_{n}|^{2}-2\langle x,y_{n}\rangle\geq |x|^{2}+|y_{n}|^{2}.
\end{align*}
Thus one concludes that $|x-y_{n}|\geq|x|$. By the fact that $|x-y_{n}|\geq|x|$, the monotonicity of $w_{\la}$
and $\chi(|x|)x\leq \tau$ for any $x\in\R^{N}$, we infer that
$$
|w_{\la}(x-y_{n})|^{p}\chi(|x|)|x|\leq \tau|w_{\la}(x)|^{p}\in L^{1}(\R^{N}).
$$
Besides, note that $w_{\la}(x-y_{n})\ri 0$ for each $x\in\R^{N}$ as $n\ri\infty$. Then we can use the Lebesgue 
dominated convergence theorem to deduce that
\begin{align}\label{esb5}
\int_{(\R^{N})_{n}^{-}}|w_{\la}(x-y_{n})|\chi(|x|)|x|dx=o_{n}(1).
\end{align} 

In the light of \eqref{esb4} and \eqref{esb5}, one has that
\begin{align}\label{esb6}
&\quad\langle\mu(w_{\la}(x-y_{n})),\frac{y_{n}}{|y_{n}|}\rangle\nonumber\\
&=\int_{(\R^{N})_{n}^{+}}|w_{\la}(x-y_{n})|^{p}\chi(|x|)\frac{\langle x,y_{n}\rangle}{|y_{n}|}dx
+\int_{(\R^{N})_{n}^{-}}|w_{\la}(x-y_{n})|^{p}\chi(|x|)\frac{\langle x,y_{n}\rangle}{|y_{n}|}dx\nonumber\\
&\geq \frac{\tau_{0}w_{\la}^{p}(0)}{2^{p+3}}|B_{\tau_{0}}(0)|
-\int_{(\R^{N})_{n}^{-}}|w_{\la}(x-y_{n})|^{p}\chi(|x|)|x|dx\nonumber\\
&=\frac{\tau_{0}w_{\la}^{p}(0)}{2^{p+3}}|B_{\tau_{0}}(0)|+o_{n}(1).
\end{align}

But $\mu$ is continuous, $\mu(u_{n})=0$ and $u_{n}-w_{\la}(\cdot-y_{n})\ri0$ in $\Y$, we have 
$\mu(w_{\la}(\cdot-y_{n}))\ri0$ as $n\ri\infty$. Thereby we obtain a contradiction due to \eqref{esb6}.
\end{proof}

\begin{lem}\label{ta}
There exists $R_{a}'>R_{a}$ such that
$$
\J(\Psi_{a,R_{a}}(y))<b(a)\quad\mbox{and}\quad
\langle \mu(\Psi_{a,R_{a}}(y)),y\rangle>0\quad\mbox{for}\quad y\in\R^{N}\,\,\mbox{with}\,\,|y|\geq R_{a}'.
$$
\end{lem}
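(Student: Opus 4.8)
The plan is to establish the two inequalities separately and then take $R_a'$ larger than both thresholds that arise (and than $R_a$). Throughout I would use the structural observation that, since $\R^N\setminus\Omega\subset B_{R_a}(0)$ and $\eta(\cdot/R_a)\equiv0$ on $B_{R_a}(0)$, every function $\Psi_{a,R_a}(y)$ is supported in $\{|x|\geq R_a\}\subset\Omega$; consequently $\J(\Psi_{a,R_a}(y))=\I(\Psi_{a,R_a}(y))$, exactly as in the proof of Lemma \ref{min}.

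For the energy bound I would simply combine Lemma \ref{be}$(ii)$, which gives $\I(\Psi_{a,R_a}(y))\to m_*(a)$ as $|y|\to\infty$, with Lemma \ref{esb}, which gives $b(a)>m_*(a)$. Since $m_*(a)<b(a)$, there is $R_1>R_a$ such that $\J(\Psi_{a,R_a}(y))=\I(\Psi_{a,R_a}(y))<b(a)$ for all $|y|\geq R_1$.

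For the barycenter inequality, write $\Psi_{a,R_a}(y)=c_y\,\eta(\cdot/R_a)w_\la(\cdot-y)$ with $c_y=a^{1/p}/\|\eta(\cdot/R_a)w_\la(\cdot-y)\|_{L^p(\R^N)}>0$, so that
$$
\langle\mu(\Psi_{a,R_a}(y)),y\rangle=c_y^p\int_{\R^N}|\eta(x/R_a)|^p|w_\la(x-y)|^p\chi(|x|)\langle x,y\rangle\,dx.
$$
As $c_y^p>0$, it suffices to show the integral $J(y)$ is positive for $|y|$ large, and here I would repeat the split used in Lemma \ref{esb}, namely $\R^N=(\R^N)_y^+\cup(\R^N)_y^-$ with $(\R^N)_y^{\pm}=\{x:\pm\langle x,y\rangle>0\}$. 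On $(\R^N)_y^-$ one has $|x-y|\geq|x|$, so monotonicity of $w_\la$ together with $\chi(|x|)|x|\leq\tau$ gives the bound $|w_\la(x-y)|^p\chi(|x|)|x|\leq\tau|w_\la(x)|^p\in L^1(\R^N)$; since $w_\la(x-y)\to0$ pointwise, dominated convergence yields $\int_{(\R^N)_y^-}|w_\la(x-y)|^p\chi(|x|)|x|\,dx=o(1)$, whence the negative contribution to $J(y)$ is bounded in absolute value by $|y|\,o(1)$. On $(\R^N)_y^+$, discarding all but the ball $B_{\tau_0}(y)$ (on which $w_\la(\cdot-y)\geq\frac{1}{2} w_\la(0)$ by Lemma \ref{esb}, $\eta(\cdot/R_a)\equiv1$ once $|y|$ is large, $\langle x,y\rangle\geq|y|^2/4$, and $\chi(|x|)=\tau/|x|$) produces a lower bound of order $|y|$. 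Dividing $J(y)$ by $|y|$ then gives $J(y)/|y|\geq C-o(1)$ with $C>0$, so $J(y)>0$ and hence $\langle\mu(\Psi_{a,R_a}(y)),y\rangle>0$ for all $|y|\geq R_2$ with a suitable $R_2>R_a$.

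Taking $R_a'=\max\{R_1,R_2\}>R_a$ finishes the argument. The only real work is the barycenter estimate, which is essentially the computation \eqref{esb4}--\eqref{esb6} of Lemma \ref{esb} repeated with $y$ in place of $y_n$; the new features are merely that the cutoff $\eta$ is harmless (it is bounded by $1$ on the minus part and equals $1$ near $x=y$) and that the positive contribution grows linearly in $|y|$ while the negative one is $o(|y|)$, so the sign of $J(y)$ is forced by the positive part.
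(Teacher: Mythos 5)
Your proof is correct and follows essentially the same route as the paper: the energy bound comes from Lemma \ref{be}$(ii)$ combined with Lemma \ref{esb}, and the barycenter positivity is obtained by repeating the half-space computation \eqref{esb4}--\eqref{esb6} with $y$ in place of $y_{n}$, which is exactly what the paper does under the phrase ``proceeding as \eqref{esb6}''. Your explicit handling of the cutoff $\eta$ and the division by $|y|$ merely spell out details the paper leaves implicit.
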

\begin{proof}
Recall that $\R^{N}\backslash\Omega\subset B_{R_{a}}(0)$. Then there holds that $\Psi_{a,R_{a}}(y)\in S(a)$ 
and $\J(\Psi_{a,R}(y))=\I(\Psi_{a,R}(y))$ for any $y\in\R^{N}$. By virtue of Lemma \ref{be}-$(ii)$ and Lemma \ref{esb}, we infer that there exists $R_{a}'>R_{a}$ such that
$$
\J(\Psi_{a,R_{a}}(y))<b_{a}\quad\mbox{for any}\quad y\in\R^{N}\quad\mbox{and}\quad|y|\geq R_{a}'.
$$

Proceeding as \eqref{esb6}, for $|y|\geq R_{a}'$ large enough we conclude that
\begin{align*}
&\quad\langle\mu(\Psi_{a,R}(y)),\frac{y}{|y|})\rangle_{\R^{N}}\nonumber\\
&=C_{a,\la}\int_{(\R^{N})_{n}^{+}}|\eta(\frac{x}{R_{a}})w_{\la}(x-y)|^{p}\chi(|x|)\frac{\langle x,y\rangle}{|y|}dx
+C_{a,\la}\int_{(\R^{N})_{n}^{-}}|\eta(\frac{x}{R_{a}})w_{\la}(x-y)|^{p}\chi(|x|)\frac{\langle x,y\rangle}{|y|}dx\nonumber\\
&\geq \frac{\tau_{0}w_{\la}^{p}(0)}{2^{p+3}}|B_{\tau_{0}}(0)|
-C_{a,\la}\int_{(\R^{N})_{n}^{-}}|\eta(\frac{x}{R_{a}})w_{\la}(x-y_{n})|^{p}\chi(|x|)|x|dx\nonumber\\
&=\frac{\tau_{0}w_{\la}^{p}(0)}{2^{p+3}}|B_{\tau_{0}}(0)|+o_{n}(1),
\end{align*}
where 
$$
C_{a,\la}:=\frac{\sqrt[p]{a}}{\|\eta(\cdot/R_{a})w_{\la}(\cdot-y)\|_{L^{p}(\R^{N})}}.
$$
So we can deduce that
$$
\langle \mu(\Psi_{a,R_{a}}(y)),y\rangle>0\quad\mbox{for}\quad y\in\R^{N}\,\,\mbox{with}\,\,|y|\geq R_{a}'.
$$
\end{proof}

Let us introduce the minimax class
$$
\Gamma:=\{\gamma\in C(\overline{B_{R_{a}'}(0)},S(a)):\gamma(y)=\Psi_{a,R_{a}}(y)\quad\mbox{for any}
\quad y\in\partial B_{R_{a}'}(0)\}.
$$
Define the associated energy level
$$
d:=\inf_{\gamma\in\Gamma}\sup_{y\in B_{R_{a}'}(0)}\J(\gamma(y)).
$$

There holds that $\Gamma$ in not empty and $d$ is well-defined since $\Psi_{a,R_{a}}\in\Gamma$.

\begin{lem}\label{bu}
One has that $b(a)\leq d<\xi m_{*}(a)$.
\end{lem}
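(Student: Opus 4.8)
The plan is to establish the two inequalities separately. The upper bound $d<\xi m_{*}(a)$ comes from testing the minimax class with the single admissible path $\Psi_{a,R_{a}}$, whereas the lower bound $b(a)\le d$ is the topological heart of the argument and is obtained through a Brouwer-degree intersection with the set $\mathcal{B}$.

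For the upper bound I would first observe that $\Psi_{a,R_{a}}\in\Gamma$: it is continuous on $\overline{B_{R_{a}'}(0)}$ with values in $S(a)$ (recall $\R^{N}\setminus\Omega\subset B_{R_{a}}(0)$, so $\supp(\Psi_{a,R_{a}}(y))\subset\Omega$), and on $\partial B_{R_{a}'}(0)$ it trivially equals itself. Since each $\Psi_{a,R_{a}}(y)$ is supported in $\Omega$, the domains $\Omega$ and $\R^{N}$ give the same integrals, so $\J(\Psi_{a,R_{a}}(y))=\I(\Psi_{a,R_{a}}(y))$. Using this path in the definition of $d$ and then Lemma \ref{up} with $R=R_{a}$ yields
\begin{align*}
d\le\sup_{y\in B_{R_{a}'}(0)}\J(\Psi_{a,R_{a}}(y))
=\sup_{y\in B_{R_{a}'}(0)}\I(\Psi_{a,R_{a}}(y))
\le\sup_{y\in\R^{N}}\I(\Psi_{a,R_{a}}(y))<\xi m_{*}(a).
\end{align*}

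For the lower bound the key claim is that every $\gamma\in\Gamma$ meets $\mathcal{B}$, i.e. $\mu(\gamma(y_{0}))=0$ for some $y_{0}\in B_{R_{a}'}(0)$. To see this I would set $F(y):=\mu(\gamma(y))$, a continuous map $\overline{B_{R_{a}'}(0)}\ri\R^{N}$, and consider the affine homotopy $H(t,y):=(1-t)F(y)+ty$. On the boundary $\gamma=\Psi_{a,R_{a}}$, so Lemma \ref{ta} gives $\langle F(y),y\rangle>0$ for $|y|=R_{a}'$; hence
$$
\langle H(t,y),y\rangle=(1-t)\langle F(y),y\rangle+t|y|^{2}>0\quad\mbox{for all }t\in[0,1],\ |y|=R_{a}',
$$
so $H(t,\cdot)$ never vanishes on $\partial B_{R_{a}'}(0)$. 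By homotopy invariance of the Brouwer degree, $\deg(F,B_{R_{a}'}(0),0)=\deg(\mathrm{id},B_{R_{a}'}(0),0)=1\neq0$, which forces a zero $y_{0}$ of $F$ in the interior; thus $\gamma(y_{0})\in\mathcal{B}$ and $\sup_{y\in B_{R_{a}'}(0)}\J(\gamma(y))\ge\J(\gamma(y_{0}))\ge b(a)$. Taking the infimum over $\gamma\in\Gamma$ gives $d\ge b(a)$.

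The main obstacle I expect is this degree/intersection step: one must confirm that $\mu$ is well-defined and continuous on $S(a)$ and, crucially, that the sign condition $\langle F(y),y\rangle>0$ on $\partial B_{R_{a}'}(0)$ supplied by Lemma \ref{ta} is genuinely enough to compute the degree via the homotopy to the identity. Everything else—admissibility of $\Psi_{a,R_{a}}$ and the reduction $\J=\I$ on its support—is routine once the inclusion $\R^{N}\setminus\Omega\subset B_{R_{a}}(0)$ is invoked.
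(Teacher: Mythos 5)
Your proposal is correct and follows essentially the same route as the paper: the upper bound $d<\xi m_{*}(a)$ by testing $d$ with the admissible map $\Psi_{a,R_{a}}$ together with Lemma \ref{up}, and the lower bound $b(a)\leq d$ via the affine homotopy $H(t,y)=(1-t)\mu(\gamma(y))+ty$, the boundary sign condition from Lemma \ref{ta}, and homotopy invariance of the Brouwer degree to produce a point of $\gamma(\overline{B_{R_{a}'}(0)})\cap\mathcal{B}$. No gaps; the step you flagged as the main obstacle is exactly the paper's argument and it goes through as you describe.
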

\begin{proof}
It follows from $\Psi_{a,R_{a}}\in S(a)$, the definition of $d$ and Lemma \ref{up} that
$$
d\leq \sup_{y\in B_{R_{a}'}}\J(\Psi_{a,R_{a}}(y))<\xi m_{*}(a).
$$
Next we show that $b(a)\leq d$. For any $\gamma\in\Gamma$, we define the map
$H:[0,1]\times\overline{B_{R_{a}'}(0)}\ri\R^{N}$ by
$$
H(t,y)=(1-t)\mu(\gamma(y))+ty.
$$
Then $H$ is continuous.
One can derive from Lemma \ref{ta} that for any $(t,y)\in[0,1]\times\partial B_{R_{a}'}(0)$,
$$
\langle H(t,y),y\rangle=(1-t)\langle\mu(\gamma(y)),y\rangle
+t\langle y,y\rangle>0,
$$
which leads to that $0\not\in H([0,1],\partial B_{R_{a}'}(0))$. Then by the homotopy invariance of Brouwer degree, we can infer that
\begin{align*}
\deg (\mu\circ\gamma,B_{R_{a}'}(0),0)=\deg(\text{id},B_{R_{a}'}(0),0)=1,
\end{align*}
where $\text{id}$ is the identity map. Thus there exists $\hat{y}\in B_{R_{a}'}(0)$ such that 
$\mu(\gamma(\hat{y}))=0$. Then $\gamma(\hat{y})\in\mathcal{B}$. So we can conclude that for any
$\gamma\in\Gamma$,
$$
b(a)\leq\J(\gamma(\hat{y}))\leq\sup_{y\in B_{R_{a}'}(0)}\J(\gamma(y)).
$$
By the arbitrariness of $\gamma$, there holds $b(a)\leq d$. 
\end{proof}

\begin{proof}[{\bf Proof of Theorem \ref{thm2}}]
Let $\Sigma:=\{\Psi_{a,R_{a}}(y):y\in \partial B_{R_{a}'}(0)\}\subset S(a)$ be a compact set. In view of Lemma \ref{esb}, Lemma \ref{ta} and Lemma \ref{bu}, one has that
\begin{align}\label{thm21}
m_{*}(a)<\sup_{u\in \Sigma}\J(u)<b(a)\leq d<\xi m_{*}(a).
\end{align}
We consider the class 
$$
\mathcal{F}:=\{\gamma(\overline{B_{R_{a}'}(0)}):\gamma\in\Gamma\}.
$$
Then $\mathcal{F}$ is a homotopy-stable family of compact subsets of $S(a)$ with closed boundary $\Sigma$, see
\cite[Definition 3.1]{G1993}.
In fact, take $\gamma\in\Gamma$ and $h\in C([0,1]\times S(a),S(a))$ satisfying
$$
h(t,u)=u\quad\mbox{for any}\quad (t,u)\in\{0\}\ast S(a)\cup [0,1]\times \Sigma.
$$
Let $\hat{h}:B_{R_{a}}'\ri S(a)$ defined as $\hat{h}=h(1,\gamma(\cdot))$. By the definition of $\Gamma$ and the meaning of $h$, we have that for any $y\in \partial B_{R_{a}'}(0)$,
$$
\hat{h}(y)=h(1,\gamma(y))=h(1,\Psi_{a,R_{a}}(y))=\Psi_{a,R_{a}}(y),
$$
which implies $\hat{h}(B_{R_{a}}')\in\mathcal{F}$. Thus we proved that $\mathcal{F}$ is a homotopy-stable family of compact subsets of $S(a)$ with closed boundary $\Sigma$.

Applying \cite[Theorem 3.2]{G1993}, there exists sequence $\{u_{n}\}\subset S(a)$ such that
$\J(u_{n})\ri d$ and uniformly for $\varphi\in\X$ with $\|\varphi\|_{\X}\leq1$,
$$
\langle\J'(u_{n}),\varphi\rangle=\la_{n}\int_{\Omega}|u_{n}|^{p-2}u_{v}\varphi dx+o_{n}(1).
$$
It follows from Lemma \ref{co} and Lemma \ref{bu} that there exists $(u,\la)\in\X\times(0,\infty)$ such that
$$
(u_{n},\la_{n})\ri (u,\la)\quad\mbox{in}\quad\X\times\R.
$$
\end{proof}

\begin{proof}[{\bf Proof of Theorem \ref{thm3}}]
Let $R:=\max_{x\in\R^{N}\setminus\Omega}|x|+1$. Then $\R^{N}\setminus\Omega\subset B_{R}(0)$. We denote that for $\theta\in(0,1)$, 
$$
\Omega_{\theta}:=\{\theta x:x\in\Omega\}.
$$
Thus we can conclude that $\R^{N}\setminus\Omega\subset B_{\theta R}(0)$ for any $\theta\in(0,1)$.
Let $R_{a}$ be given in Theorem \ref{thm2} with $a=1$ and $a_{*}=(\frac{R_{1}}{R})^{s_{p,r}}$.
Set $\theta:=a^{1/ps_{p,r}}$. We have $\theta R\leq R_{1}$, which leads to 
$\R^{N}\backslash\Omega_{\theta}\subset B_{R_{1}}(0)$. It follows from Theorem \ref{thm2} that there exists
$(u_{1},\la_{1})\in W_{0}^{1,p}(\Omega_{\theta})\times(-\infty,0)$ such that
$u_{1}>0$ in $\Omega_{\theta}$, $\la_{1}<0$ and $(u_{1},\la_{1})$ satisfies 
\begin{align*}
\begin{cases}
\Lam u=\la |u|^{p-2}u+|u|^{r-2}u\quad\mbox{in}\quad\Omega_{\theta},\\
\int_{\Omega_{\theta}}|u|^{p}dx=1.
\end{cases}
\end{align*}
Define $u(x):=\theta^{\frac{p}{r-p}}u_{1}(\theta x)$ for any $x\in\Omega$. Then we can conclude taht
$(u,\la)$ is a solution of \eqref{p} with $\la=\theta^{p}\la_{1}<0$. 
\end{proof}

\section{Multiplicity of solutions}\label{sec5}

Let $\Omega=\R^{N}\setminus\overline{B_{1}(0)}$ and let $\Xr$ and $\Yr$ denote the set of all radial functions
in $\X$ and $\Y$ respectively. Take
$$
m_{\rad}:=\inf_{u\in S_{\rad}(a)}\J(u),
$$
where 
$$
S_{\rad}(a):=\{u\in\Xr:\|u\|_{p}^{p}=a\}.
$$

For any $u\in\Xr$, we have that $\mu(u)=0$, which implies that $u\in\mathcal{B}$ for any $u\in\Xr$. Then one can deduce from Lemma \ref{esb} that
$$
-\infty<m_{*}(a)<b(a)\leq m_{\rad}(a)<\infty.
$$

Recall the notation of genus. Let 
$$
\mathcal{A}:=\{A\subset S_{\rad}(a): A\,\mbox{is closed in}\, S_{\rad}(a),\,A=-A\}.
$$
For any nonempty set $A\in\mathcal{A}$, the genus $\mathcal{G}(A)$ of $A$ is defined by the least integer $k\geq1$ for which there exists an odd continuous mapping $\kappa:A\ri\R^{k}\setminus\{0\}$. Let
$\mathcal{G}(A)=+\infty$ if such an integer does not exist and $\mathcal{G}(A)=0$ if $A=\emptyset$.
We define for any $k\in\mathbb{N}^{+}$ that 
$$
\mathcal{A}_{k}:=\{A\in\mathcal{A}:\mathcal{G}(A)\geq k\}.
$$
If $\mathcal{A}_{k}\neq\emptyset$, we define
$$
c_{k}:=\inf_{A\in\mathcal{A}_{k}}\sup_{u\in A}\J(u).
$$

Next we introduce an abstract minimax theorem given by \cite[Theorem 2.1]{JL2019}.

\begin{lem}\label{Je}(\cite[Theorem 2.1]{JL2019})
Let $\Phi:\Xr\ri\R$ be an even functional of class $C^{1}$. Assume that $\J|_{S_{\rad}(a)}$ is bounded from below
and satisfies the $(PS)_{c}$ condition for all $c<0$, and that $\mathcal{A}_{k}\neq\emptyset$ for each $k\in\mathbb{N}^{+}$. Then a sequence of minimax values $-\infty<c_{1}\leq c_{2}\leq\cdot\cdot\cdot\leq c_{k}\leq\cdot\cdot\cdot$ can be defined as follows:
$$
c_{k}:=\inf_{A\in\mathcal{A}_{k}}\sup_{u\in A}\Phi(u)\quad\mbox{for}\quad k\geq1,
$$ 
and the following statements hold.

$(i)$ $c_{k}$ is a critical value of $\Phi|_{S_{\rad}(a)}$ provided $c_{k}<0$;

$(ii)$ Denote by $K_{c}$ the set of critical points of $\Phi|_{S_{\rad}(a)}$ at a level $c\in\R$. If
$$
c_{k}=c_{k+1}=\cdot\cdot\cdot=c_{k+l-1}:=c<0\quad\mbox{for some}\quad k,\,l\geq1,
$$ 
then $\mathcal{G}(K_{c})\geq l$. In particular, $\Phi|_{S_{\rad}(a)}$ has infinitely many critical points at the level $c$ if $l\geq2$;

$(iii)$ If $c_{k}<0$ for all $k\geq1$, then $c_{k}\ri0^{-}$ as $k\ri\infty$.
\end{lem}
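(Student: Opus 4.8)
The plan is to follow the classical Ljusternik--Schnirelmann genus machinery adapted to the symmetric constraint manifold $S_{\rad}(a)$, exactly as in \cite{JL2019}. The starting point is to record the standard properties of the Krasnoselskii genus $\mathcal{G}$: monotonicity ($A\subset B\Rightarrow\mathcal{G}(A)\leq\mathcal{G}(B)$), subadditivity ($\mathcal{G}(A\cup B)\leq\mathcal{G}(A)+\mathcal{G}(B)$), monotonicity under odd continuous maps, and, crucially, the fact that any compact symmetric set $K$ admits a symmetric neighborhood $N$ with $\mathcal{G}(\overline{N})=\mathcal{G}(K)$. Since $\mathcal{A}_{k+1}\subset\mathcal{A}_k$, the values $c_k=\inf_{A\in\mathcal{A}_k}\sup_{u\in A}\Phi(u)$ are nondecreasing, and the assumed lower boundedness of $\Phi|_{S_{\rad}(a)}$ gives $c_1>-\infty$; the hypothesis $\mathcal{A}_k\neq\emptyset$ makes each $c_k$ well defined.

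The engine of the argument is an equivariant deformation lemma on $S_{\rad}(a)$. Using the $C^1$ structure one constructs, near a putative non-critical level, an odd, locally Lipschitz pseudo-gradient vector field tangent to the manifold; its flow produces, for each small $\varepsilon>0$, an odd homeomorphism $\eta$ with $\eta(\Phi^{c+\varepsilon}\setminus N)\subset\Phi^{c-\varepsilon}$, where $\Phi^{s}:=\{u\in S_{\rad}(a):\Phi(u)\leq s\}$ and $N$ is a prescribed symmetric neighborhood of the critical set. The $(PS)_c$ condition (for $c<0$) is what guarantees that a deformation of fixed amplitude $\varepsilon$ can be carried out away from the critical points. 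Statement $(i)$ then follows: if $c_k<0$ were not a critical value, then $K_{c_k}=\emptyset$, and deforming a nearly optimal set $A\in\mathcal{A}_k$ with $\sup_A\Phi\leq c_k+\varepsilon$ to $\eta(A)$, which still lies in $\mathcal{A}_k$ because genus is preserved by the odd homeomorphism, yields $\sup_{\eta(A)}\Phi\leq c_k-\varepsilon$, contradicting the definition of $c_k$.

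For the multiplicity statement $(ii)$, assume $c_k=\cdots=c_{k+l-1}=c<0$ but $\mathcal{G}(K_c)\leq l-1$. By $(PS)_c$ the set $K_c$ is compact and symmetric, so it has a symmetric neighborhood $N$ with $\mathcal{G}(\overline{N})\leq l-1$; applying the deformation lemma away from $N$ and choosing $A\in\mathcal{A}_{k+l-1}$ with $\sup_A\Phi\leq c+\varepsilon$, the set $\overline{A\setminus N}$ has genus at least $\mathcal{G}(A)-\mathcal{G}(\overline{N})\geq k$ by subadditivity, and after deformation lands in $\Phi^{c-\varepsilon}$, giving $c_k\leq c-\varepsilon$, a contradiction. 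Hence $\mathcal{G}(K_c)\geq l$, and $l\geq2$ forces infinitely many critical points at level $c$. Finally, for $(iii)$, since the $c_k$ are nondecreasing and all negative they converge to some $\bar c\leq0$; if $\bar c<0$, the $(PS)_{\bar c}$ condition makes $K_{\bar c}$ compact with finite genus $m$, and one more application of the deformation lemma together with the continuity of genus shows that $c_{k+m}$ cannot stay below $\bar c$ for large $k$, forcing $\bar c=0$, i.e.\ $c_k\ri0^{-}$.

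The hard part will be the rigorous construction of the odd, manifold-tangent pseudo-gradient flow and the verification that the deformation can be carried out with amplitude uniform in $\varepsilon$; this is precisely where the $(PS)_c$ hypothesis and the $\mathbb{Z}_2$-symmetry of $S_{\rad}(a)$ enter. Since this abstract scheme is exactly the content of \cite[Theorem 2.1]{JL2019}, I would invoke that result directly rather than reproduce the deformation construction, and merely verify in the application that $\Phi=\J|_{S_{\rad}(a)}$ is even, bounded below, and satisfies $(PS)_c$ for $c<0$.
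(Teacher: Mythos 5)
Your proposal ends exactly where the paper does: Lemma \ref{Je} is stated as a quotation of \cite[Theorem 2.1]{JL2019} and the paper offers no proof of it, relying instead on verifying its hypotheses (evenness, boundedness below, the $(PS)_{c}$ condition, and $\mathcal{A}_{k}\neq\emptyset$) in the subsequent lemmas, which is precisely your final recommendation. Your sketch of the genus/deformation machinery behind the cited theorem is a sound outline of the standard argument, but since you too ultimately invoke the citation, the approaches are essentially the same.
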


Take $\Phi=\J$. We can easily see that $\J$ is an even functional of class $C^{1}$ and $\J|_{S_{\rad}(a)}$ is bounded from below.

\begin{lem}\label{cor}
$\J|_{S_{\rad}(a)}$ satisfies the $(PS)_{c}$ condition for $c\in\R$.
\end{lem}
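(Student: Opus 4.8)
The plan is to use the compact embedding of radial functions as a substitute for the splitting lemma (Lemma \ref{spl}) that was needed on the full space $\X$. The structural fact I would rely on is that, on $\Omega=\R^{N}\setminus\overline{B_{1}(0)}$, the radial space $\Xr$ embeds \emph{compactly} into $L^{s}(\Omega)$ for every $s\in(p,p^{*})$ (extend radial functions by zero and invoke the Strauss-type compactness for $W^{1,p}_{\rad}(\R^{N})$). Given a $(PS)_{c}$ sequence $\{u_{n}\}\subset S_{\rad}(a)$, I would first show it is bounded: since $\|u_{n}\|_{p}^{p}=a$ is fixed and $p<r<p_{c}$ forces $r\gamma_{p,r}<p$, the Gagliardo-Nirenberg inequality \eqref{gn} makes $\J$ coercive on $S_{\rad}(a)$. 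Passing to a subsequence, $u_{n}\wri u_{0}$ in $\Xr$, and the compact radial embedding yields $u_{n}\ri u_{0}$ in $L^{r}(\Omega)$ and a.e.\ in $\Omega$.

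Next I would extract the Lagrange multiplier: there are $\la_{n}\in\R$ with
$$\int_{\Omega}|\gr u_{n}|^{p-2}\gr u_{n}\cdot\gr\varphi\,dx-\int_{\Omega}|u_{n}|^{r-2}u_{n}\varphi\,dx=\la_{n}\int_{\Omega}|u_{n}|^{p-2}u_{n}\varphi\,dx+o_{n}(1)\|\varphi\|$$
for all $\varphi\in\Xr$. Testing with $\varphi=u_{n}$ and using the boundedness of $\{u_{n}\}$ shows $\{\la_{n}\}$ is bounded, so $\la_{n}\to\la$. Eliminating $\|u_{n}\|_{r}^{r}$ between this identity and $\J(u_{n})\to c$ gives $\la a=rc-(\tfrac{r}{p}-1)\lim\|\gr u_{n}\|_{p}^{p}$, so for the relevant levels $c<0$ I obtain the decisive sign $\la\leq rc/a<0$.

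Finally I would upgrade weak to strong convergence via the $(S_{+})$ structure. I introduce $A:\Xr\to(\Xr)^{*}$ by $\langle A(u),\varphi\rangle=\int_{\Omega}|\gr u|^{p-2}\gr u\cdot\gr\varphi+(-\la)\int_{\Omega}|u|^{p-2}u\varphi$, which is coercive for the \emph{full} norm of $\Xr$ precisely because $-\la>0$. Rewriting the multiplier identity and absorbing $(\la_{n}-\la)\int|u_{n}|^{p-2}u_{n}\varphi$ into the error term shows $A(u_{n})=|u_{n}|^{r-2}u_{n}+o_{n}(1)$ in $(\Xr)^{*}$. Testing with $\varphi=u_{n}-u_{0}$, the right-hand side tends to $0$ (as $u_{n}\ri u_{0}$ in $L^{r}$ and $|u_{n}|^{r-2}u_{n}$ is bounded in $L^{r'}(\Omega)$) and $\langle A(u_{0}),u_{n}-u_{0}\rangle\to0$ by weak convergence, so $\langle A(u_{n})-A(u_{0}),u_{n}-u_{0}\rangle\to0$. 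Since
$$\langle A(u_{n})-A(u_{0}),u_{n}-u_{0}\rangle=\int_{\Omega}(|\gr u_{n}|^{p-2}\gr u_{n}-|\gr u_{0}|^{p-2}\gr u_{0})\cdot\gr(u_{n}-u_{0})+(-\la)\int_{\Omega}(|u_{n}|^{p-2}u_{n}-|u_{0}|^{p-2}u_{0})(u_{n}-u_{0})$$
is a sum of two nonnegative terms, each vanishes; the monotonicity inequality for $p\geq2$ then forces $\gr u_{n}\to\gr u_{0}$ and $u_{n}\to u_{0}$ in $L^{p}(\Omega)$, that is $u_{n}\to u_{0}$ in $\Xr$.

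The step I expect to be the main obstacle is this last passage from weak to strong convergence, which is genuinely harder than in the semilinear case $p=2$: the $p$-Laplacian term cannot be split linearly, so I must use the strict monotonicity (an $(S_{+})$-type argument) for the operator $A$, and this crucially needs the sign $\la<0$ so that $A$ simultaneously controls the gradient and the $L^{p}$-part of the norm. Securing $\la<0$ at the levels $c<0$ and the radial compactness giving strong $L^{r}$ convergence are thus the two pillars of the argument.
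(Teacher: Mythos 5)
Your proof is correct (at the levels $c<0$, which is all that the genus machinery of Lemma \ref{Je} requires) and it rests on the same three pillars as the paper's argument: the compact embedding of $\Xr$ into $L^{s}(\Omega)$ for $s\in(p,p^{*})$, the sign $\la<0$ extracted from the multiplier identity, and the monotonicity inequality $\langle|x|^{p-2}x-|y|^{p-2}y,x-y\rangle\geq C_{0}|x-y|^{p}$ for $p\geq2$ from \cite[Lemma 2.1]{D1998}. The final step, however, is organized along a genuinely different route. The paper localizes: it tests $\Jl'(u_{n})-\Jl'(u)$ against $\varphi_{R}(u_{n}-u)$ with a radial cutoff $\varphi_{R}$, kills the cross terms by the H\"older inequality and local compactness, obtains $\gr u_{n}\ri\gr u$ in $L^{p}(B_{R}(0))$ for every $R$, hence a.e.\ convergence of the gradients, and only then concludes strong convergence by passing to the limit in the equation and applying the Br\'ezis-Lieb lemma, with $-\la>0$ again absorbing the $L^{p}$ part. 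You instead run a single global $(S_{+})$ argument for $A(u)=-\Delta_{p}u+(-\la)|u|^{p-2}u$: strong $L^{r}$ convergence (radial compactness) removes the nonlinear term, weak convergence removes $\langle A(u_{0}),u_{n}-u_{0}\rangle$, and since the remaining pairing splits into two nonnegative pieces, each must vanish, so Damascelli's inequality yields $\gr u_{n}\ri\gr u_{0}$ and $u_{n}\ri u_{0}$ in $L^{p}(\Omega)$ simultaneously. Your version is shorter and avoids both the cutoff localization and the Br\'ezis-Lieb step; what the paper's device buys is a scheme (a.e.\ gradient convergence via local testing) that still functions when only local compactness is available, as in the non-radial splitting analysis of Lemma \ref{spl}, whereas your shortcut is tied to the global strong $L^{r}$ convergence that radial symmetry provides. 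One further remark: the lemma is stated for all $c\in\R$, but your derivation of $\la<0$ — exactly like the paper's own, which simply refers back to Lemma \ref{co} — genuinely uses $c<0$; this restriction is harmless here, since the minimax values $c_{k}$ of Lemma \ref{eq} are negative, but it is worth flagging that neither argument covers $c\geq0$ as literally claimed.
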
 
\begin{proof}
Assume that $\{u_{n}\}\subset S_{\rad}(a)$ fulfills that $\J(u_{n})\ri c$ and
$(\J|_{S_{\rad}(a)})'(u_{n})\ri0$. We can conclude that $\{u_{n}\}$ is bounded in $\Xr$. Thus in the sense of a subsequence there exists $u\in\Xr$ such that $u_{n}\wri u$ in $\Xr$ and $u_{n}\ri u$ in $L^{t}(\Omega)$
for any $t\in(p,p^{*})$. Similar to the proof of Lemma \ref{co}, there exists $\la<0$ such that for every
$\varphi\in \Xr$,
\begin{align}\label{cor1}
\int_{\Omega}|\gr u_{n}|^{p-2}\gr u_{n}\gr\varphi dx
-\la\int_{\Omega}|u_{n}|^{p-2}u_{n}\varphi dx-\int_{\Omega}|u_{n}|^{r-2}u_{n}\varphi dx
=o_{n}(1)\|\varphi\|_{\X}.
\end{align}
Define the map
$$
V(x,y):=\langle |x|^{p-2}x-|y|^{p-2}y,x-y\rangle\quad\mbox{for any}\quad x,\,y\in\R^{N}.
$$
Let $\varphi_{R}:\R^{N}\ri\R$ be a smooth function and radially symmetric such that $\varphi_{R}=1$ in $B_{R}(0)$,
$\varphi_{R}=0$ in $\R^{N}\setminus B_{2R}(0)$, $0\leq\varphi_{R}\leq1$ and $|\gr\varphi_{R}(x)|\leq\frac{C}{R}$.
By \eqref{cor1} and the fact that $\varphi_{R}(u_{n}-u)\wri0$ in $\Xr$, there holds that 
$\langle\Jl'(u_{n})-\Jl'(u),\varphi_{R}(u_{n}-u)\rangle=o_{n}(1)$, which leads to
\begin{align}\label{cor2}
&\quad\int_{\R^{N}}(V(\gr u_{n},\gr u)-\la V(u_{n},u))\varphi_{R}dx
+\int_{\R^{N}}(|\gr u_{n}|^{p-2}\gr u_{n}-|\gr u|^{p-2}\gr u)(u_{n}-u)\gr \varphi_{R}dx\nonumber\\
&=\int_{\R^{N}}(|u_{n}|^{r-2}u_{n}-|u|^{r-2}u)(u_{n}-u)\varphi_{R}dx+o_{n}(1).
\end{align}
Since $|\gr\varphi_{R}(x)|\leq\frac{C}{R}$, $\{u_{n}\}$ is bounded in $\Xr$ and $u_{n}\ri u$ in $L_{\text{loc}}^{p}(\R^{N})$, by the H\"older inequality one infers that
\begin{align*}
&\quad\bigg|\int_{\R^{N}}(|\gr u_{n}|^{p-2}\gr u_{n}-|\gr u|^{p-2}\gr u)(u_{n}-u)\gr \varphi_{R}dx\bigg|\\
&\leq\frac{C}{R}\int_{\R^{N}}(|\gr u_{n}|^{p-1}+|\gr u|^{p-1})|u_{n}-u| dx\\
&\leq\frac{C}{R}\bigg(\int_{\R^{N}}|u_{n}|^{p}dx+\int_{\R^{N}}|u|^{p}dx\bigg)^{\frac{p-1}{p}}
\bigg(\int_{B_{2R}(0)}|u_{n}-u|^{p}dx\bigg)^{\frac{1}{p}}\ri0\quad\mbox{as}\quad n\ri\infty.
\end{align*}
Similarly we can prove that
$$
\int_{\R^{N}}(|u_{n}|^{r-2}u_{n}-|u|^{r-2}u)(u_{n}-u)\varphi_{R}dx\ri0\quad\mbox{as}\quad n\ri\infty.
$$
Thus we conclude from \eqref{cor2} that
\begin{align}\label{cor3}
\int_{B_{R}(0)}V(\gr u_{n},\gr u)dx\leq o_{n}(1).
\end{align}
It follows from \cite[Lemma 2.1]{D1998} that for any $x,\,y\in\R^{N}$,
\begin{align}\label{cor4}
\langle |x|^{p-2}x-|y|^{p-2}y,x-y\rangle\geq C_{0}|x-y|^{p}
\quad\mbox{for any}\quad p\geq2.
\end{align}
Thus  we can deduce from \eqref{cor3} and \eqref{cor4} that
$$
\int_{B_{R}(0)}|\gr u_{n}-\gr u|^{p}dx=o_{n}(1).
$$
By the arbitrariness of $R$ we conclude that
$$
\gr u_{n}\ri\gr u\quad\mbox{a.e. in}\quad\R^{N}.
$$
From this fact we have that
\begin{align*}
\int_{\Omega}|\gr u|^{p} dx
-\la\int_{\Omega}|u|^{p}dx=\int_{\Omega}|u|^{r} dx+o_{n}(1).
\end{align*}
Note that 
\begin{align*}
\int_{\Omega}|\gr u_{n}|^{p} dx
-\la\int_{\Omega}|u_{n}|^{p}dx=\int_{\Omega}|u_{n}|^{r} dx+o_{n}(1).
\end{align*}
By Br\'ezis-Lieb lemma, there holds that
\begin{align*}
\int_{\Omega}|\gr u_{n}-\gr u|^{p} dx
-\la\int_{\Omega}|u_{n}-u|^{p}dx=\int_{\Omega}|u_{n}-u|^{r} dx+o_{n}(1)=o_{n}(1).
\end{align*}
Then $u_{n}\ri u$ in $\Xr$.
\end{proof}

Next we  want to prove that $\mathcal{A}_{k}\neq\emptyset$ and  $c_{k}<0$ for any integer $k>0$. 
\begin{lem}\label{eq}
For any integer $k\geq1$, there exists $\{u_{1},\cdot\cdot\cdot,u_{k}\}\subset S_{\rad}(a)$
such that $\int_{\Omega}u_{i}u_{j}dx=0$ for $i\neq j$ and
$$
T_{k}\subset S_{\rad}(a),\quad\max_{u\in T_{k}}\J(u)<0,
$$
where 
$$
T_{k}:=\bigg\{t_{1}u_{1}+\cdot\cdot\cdot+t_{k}u_{k}: t_{i}\in\R,\,i=1,\cdot\cdot\cdot,k,\,\sum_{i=1}^{k}|t_{i}|^{p}=1\bigg\}.
$$
Then $\mathcal{A}_{k}\neq\emptyset$, $c_{k}\in(-\infty,0)$ and $m_{\rad}(a)\in(-\infty,0)$.
\end{lem}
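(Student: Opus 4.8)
The plan is to produce, for every $k$, an explicit family of $k$ radial, mutually disjointly supported functions on which $\J$ stays uniformly negative; the genus statements then follow essentially for free. \emph{Construction.} Fix a radial $\phi\in C_c^\infty(\R^N)$ with $\supp\phi\subset\{1\le|x|\le2\}$ and $\|\phi\|_p^p=a$, and for $s\in(0,1)$ set $\phi_s(x):=s^{N/p}\phi(sx)$. Then $\|\phi_s\|_p^p=a$, $\supp\phi_s\subset\{1/s\le|x|\le2/s\}\subset\Omega$, and a change of variables gives
\[ \|\gr\phi_s\|_p^p=s^p\|\gr\phi\|_p^p,\qquad \|\phi_s\|_r^r=s^{N(r-p)/p}\|\phi\|_r^r. \]
Choosing scales $s_j:=s_0 3^{-j}$ for $j=1,\dots,k$ with $s_0\in(0,1)$ to be fixed, the supporting annuli $\{1/s_j\le|x|\le2/s_j\}$ are pairwise disjoint and lie in $\Omega$, so the functions $u_j:=\phi_{s_j}$ are radial, belong to $\Xr$, have disjoint supports, satisfy $\|u_j\|_p^p=a$, and hence $\int_\Omega u_iu_j\,dx=0$ for $i\ne j$.

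\emph{Negativity on $T_k$.} For $u=\sum_i t_iu_i$ with $\sum_i|t_i|^p=1$, disjointness of supports yields $\|u\|_p^p=\sum_i|t_i|^p\|u_i\|_p^p=a$, so $T_k\subset S_{\rad}(a)$, and moreover
\[ \J(u)=\sum_i\Big(\tfrac1p|t_i|^p\|\gr u_i\|_p^p-\tfrac1r|t_i|^r\|u_i\|_r^r\Big). \]
Put $s_i:=|t_i|^p$, $A:=\max_i\|\gr u_i\|_p^p=s_1^p\|\gr\phi\|_p^p$ and $B:=\min_i\|u_i\|_r^r=s_k^{N(r-p)/p}\|\phi\|_r^r>0$. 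Using $\sum_i s_i=1$ and the Jensen bound $\sum_i s_i^{r/p}\ge k^{1-r/p}$ (valid since $r/p>1$), I obtain
\[ \J(u)\le\frac Ap\sum_i s_i-\frac Br\sum_i s_i^{r/p}\le\frac Ap-\frac Br k^{1-r/p}. \]
Thus it suffices to arrange $A/p<(B/r)k^{1-r/p}$, i.e.
\[ \frac{(s_0/3)^p\|\gr\phi\|_p^p}{p}<\frac{(s_0 3^{-k})^{N(r-p)/p}\|\phi\|_r^r}{r}\,k^{1-r/p}. \]
Because $r<p_{c}$ forces $N(r-p)/p<p$, the left-hand side is of strictly higher order in $s_0$ than the right-hand side; hence for each fixed $k$ the inequality holds once $s_0=s_0(k)$ is chosen small enough, giving $\max_{u\in T_k}\J(u)<0$.

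\emph{Genus and conclusions.} The linear map $(t_1,\dots,t_k)\mapsto\sum_i t_iu_i$ is an odd homeomorphism from the $\ell^p$-sphere $\{\sum_i|t_i|^p=1\}\subset\R^k$ onto the compact symmetric set $T_k$ (injectivity from the linear independence of the disjointly supported $u_i$), and that sphere is oddly homeomorphic to $S^{k-1}$; since genus is an odd-homeomorphism invariant and $\mathcal{G}(S^{k-1})=k$, we get $\mathcal{G}(T_k)=k$ and $T_k\in\mathcal{A}_k$, so $\mathcal{A}_k\ne\emptyset$. Therefore $c_k\le\sup_{u\in T_k}\J(u)<0$, while $c_k>-\infty$ since $\J|_{S_{\rad}(a)}$ is bounded from below; and $m_{\rad}(a)\le\J(u_1)<0$ because $u_1\in S_{\rad}(a)$. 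Hence $c_k,m_{\rad}(a)\in(-\infty,0)$.

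The main obstacle is precisely making $\J$ negative on all of $T_k$ rather than only at the individual $u_i$: this forces the gradient energy to be small relative to the $L^r$ energy, yet the scaling that lowers the gradient simultaneously spreads out the support, in tension with the required disjointness. The resolution is to distribute the scales across nested annuli and to exploit the $L^p$-subcriticality $N(r-p)/p<p$, which is exactly what makes the displayed scaling inequality solvable for every $k$.
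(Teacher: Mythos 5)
Your proof is correct and follows essentially the same strategy as the paper: build $k$ disjointly supported radial bumps, exploit the subcritical scaling ($\|\gr \cdot\|_{p}^{p}\sim s^{p}$ versus $\|\cdot\|_{r}^{r}\sim s^{N(r-p)/p}$ with $N(r-p)/p<p$) to make $\J$ negative on all of $T_{k}$, and identify $T_{k}$ with the sphere $\mathbb{S}^{k-1}$ by an odd homeomorphism to get genus $k$. The only difference is cosmetic: the paper dilates $k$ fixed disjointly supported profiles by a common parameter $t\to-\infty$ (with a cutoff $\eta$), whereas you dilate a single annular profile at $k$ nested scales, which produces the disjointness automatically and replaces the paper's constant $C=\min_{\sum|t_{i}|^{p}=1}\sum|t_{i}|^{r}$ by the explicit Jensen bound $k^{1-r/p}$.
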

\begin{proof}
For each $k\geq1$ let $\{v_{1},\cdot\cdot\cdot,v_{k}\}\subset C_{c}^{\infty}(\R^{N}\setminus\overline{B_{2}(0)})\cap\Yr$ be a nonnegative sequence such that
\begin{align*}
\int_{\R^{N}}|v_{i}|^{p}dx=a,\,i=1,\,2,\cdot\cdot\cdot,k,\quad
\supp v_{i}\cap\supp v_{j}=\emptyset\quad\mbox{for}\quad i\neq j,\,i,\,j=1,\cdot\cdot\cdot,k.
\end{align*} 
Let $\eta:\R^{N}\ri\R$ be a function satisfying that
$$
\eta\equiv0\quad\mbox{for}\quad |x|\leq1,\quad\eta(x)\equiv1\quad\mbox{for}\quad|x|\geq2,
$$
and let $\sigma:\R\times\Xr\ri\Xr$ be defined as
$$
\sigma(t,v)=\eta(x)e^{\frac{N}{p}t}v(e^{t}x)\quad\mbox{for any}\quad (t,x)\in\R\times\Xr.
$$
It is easy to deduce that 
$$
\supp \sigma(t,v_{i})\cap\sigma(t, v_{j})=\emptyset\quad\mbox{for any}\quad t\in\R,\quad
i\neq j,\quad i,\,j=1,\cdot\cdot\cdot,k.
$$
Using $\supp v_{i}\subset \R^{N}\setminus \overline{B_{2}(0)}$ and by the change of variable $y=e^{t}x$, we have 
$$
\int_{\R^{N}}|\sigma(t,v_{i})|^{p}dx
=\int_{\R^{N}}e^{Nt}|v_{i}(e^{t}x)|^{p}dx
\int_{\R^{N}}e^{Nt}|v_{i}(y)|^{p}e^{-Nt}dy=a\quad\mbox{for}\quad i=1,\cdot\cdot\cdot,k.
$$
Thus $\sigma(t,v_{i})\subset S_{\rad}(a)$. We also can show that
$$
\int_{\R^{N}}|\gr \sigma(t,v_{i})|^{p}dx=e^{pt}\int_{\R^{N}}|v_{i}|^{p}dx
$$
and 
$$
\int_{\R^{N}}|\sigma(t,v_{i})|^{r}dx=e^{\frac{(r-p)N}{p}t}\int_{\R^{N}}|v_{i}|^{r}dx.
$$
Then for any $t_{i}\in\R$ with $\sum_{i=1}^{k}|t_{i}|^{p}=1$, one can conclude that
$$
\int_{\Omega}|\sum_{i=1}^{k}t_{i}\sigma(t,v_{i})|^{p}dx
=\sum_{i=1}^{k}t_{i}^{p}\int_{\Omega}|\sigma(t,v_{i})|^{p}dx=a
$$
and
\begin{align*}
\J\bigg(\sum_{i=1}^{k}t_{i}\sigma(t,v_{i})\bigg)&=
\sum_{i=1}^{k}\bigg(\frac{|t_{i}|^{p}}{p}e^{pt}\int_{\R^{N}}|\gr v_{i}|^{p}dx-
\frac{|t_{i}|^{r}}{r}e^{\frac{(r-p)N}{p}t}\int_{\R^{N}}|v_{i}|^{r}dx\bigg)\\
&\leq\frac{e^{pt}}{p}\bigg(\max_{1\leq i\leq k}\|\gr v_{i}\|_{p}^{p}-
e^{(\frac{(r-p)N}{p}-p)t}C\min_{1\leq i\leq k}\|v_{i}\|_{r}^{r}\bigg),
\end{align*}
where $C=\min_{|t_{1}|^{p}+\cdot\cdot\cdot+|t_{k}|^{p}=1}>0$.
Due to $\frac{(r-p)N}{p}-p<0$, then
$$
\J\bigg(\sum_{i=1}^{k}t_{i}\sigma(t,v_{i})\bigg)<0\quad\mbox{for}\quad t<0\quad\mbox{and}
\quad |t|\quad\mbox{large enough}.
$$

Take $u_{i}=\sigma(t,v_{i})$ with $t<0$ such that |t| large enough and
$$
T_{k}:=\bigg\{t_{1}u_{1}+\cdot\cdot\cdot+t_{k}u_{k}: t_{i}\in\R,\,i=1,\cdot\cdot\cdot,k,\,\sum_{i=1}^{k}|t_{i}|^{p}=1\bigg\}.
$$ 
We have that
$$
T_{k}\subset S_{\rad}(a),\quad\max_{u\in T_{k}}\J(u)<0.
$$
Since $T_{k}\in\mathcal{A}$ is homeomorphic to $\mathbb{S}^{k-1}:=\{z\in\R^{k}:|z|=1\}$, one can derive from the property of genus that $\mathcal{G}(T_{k})=\mathcal{G}(\mathbb{S}^{k-1})=k$, which suggests that
$T_{k}\subset \mathcal{A}_{k}$ and $\mathcal{A}_{k}\neq\emptyset$. Besides, we have
$$
m_{\rad}(a)\leq c_{k}\leq\max_{u\in T_{k}}\J(u)<0.
$$
From Lemma \ref{esb}, there holds 
$$
-\infty<m_{*}(a)<m_{\rad}(a).
$$
\end{proof}

\begin{proof}[{\bf Proof of Theorem \ref{thm4}}]
From Lemma \ref{Je}, Lemma \ref{cor} and Lemma \ref{eq}, the proof can be finished.
\end{proof}

\end{document}